\newtheorem{theorem}{Theorem}
\newtheorem{lemma}[theorem]{Lemma}
\newtheorem{proposition}[theorem]{Proposition}
\theoremstyle{definition}
\newtheorem{definition}[theorem]{Definition}
\newtheorem{const}[theorem]{Construction}
\newtheorem{remark}[theorem]{Remark}
\newcommand{\R}{\mathbb{R}}     
\newcommand{\Z}{\mathbb{Z}}       
\def\Rib{{\operatorname{Rib}}}
\def\Cr{{\operatorname{Cr}}}
\def\Len{{\operatorname{Len}}}
\begin{document}

\title[Pretzel links and ribbonlength bounds] {Ribbonlength bounds for pretzel links and \\ knots with $\leq 9$ crossings}
\author{Elizabeth Denne}
\address{Elizabeth Denne: Washington \& Lee University, Department of Mathematics, Lexington VA}
\email{dennee@wlu.edu}
\urladdr{https://elizabethdenne.academic.wlu.edu/}
\date{\today}
\makeatletter								
\@namedef{subjclassname@2020}{%
  \textup{2020} Mathematics Subject Classification}
\makeatother

\subjclass[2020]{Primary 57K10, Secondary 49Q10}
\keywords{Knots, links, pretzel links, folded ribbon knots, ribbonlength, crossing number}

\begin{abstract}
Given a thin strip of paper, tie a knot, connect the ends, and flatten into the plane. This is a physical model of a folded ribbon knot in the plane, first introduced by Louis Kauffman. We study the folded ribbonlength of these folded ribbon knots, which is defined as the knot's length-to-width ratio. The {\em ribbonlength problem} asks to find the infimal folded ribbonlength of a knot or link type. We prove that any $P(p,q,r)$ pretzel link can be constructed so that its infimal folded ribbonlength is $\leq  \frac{55}{\sqrt{3}}  \leq 31.755$. We prove that any $n$-strand pretzel link $P(p_1,p_2, \dots, p_n)$ can be constructed so that its infimal folded ribbonlength  is $\leq \frac{18n+1}{\sqrt{3}}$. This means that there is an infinite link family with a uniform bound on infimal folded ribbonlength. That is, we have shown $\alpha=0$ in the equation $c\cdot \Cr(L)^\alpha \leq \Rib([L])$, where $L$ is any link and $c$ is a constant. This paper also contains a table showing the best known upper bounds on the infimal folded ribbonlength for all knots with $\leq 9$ crossings.

\end{abstract}

\maketitle

\section{Introduction to Folded Ribbon Knots and Links}\label{sect:intro}
A {\em knot} is an embedding of the unit circle $S^1$ into $\R^3$, and a {\em link} is a disjoint union of a finite number of knots.  Thus a knot is a one component link. Physical knots encountered in our everyday world can be modeled mathematically. For example, knot theorists have studied the {\em ropelength problem}, which asks to find the minimum-length configuration of a knotted diameter-one tube embedded in Euclidean three-space \cite{BS99, CKS, DEY, DEZ, GM}.   In this paper we look at the {\em folded ribbonlength problem}, which is a discrete two-dimensional analogue of the ropelength problem. This problem first appeared in recreational mathematics \cite{John,Wel}, where trefoil knots are tied in long rectangular strips of paper, then pulled ``tight'' to form a pentagonal shape. In 2005, the formal mathematical concept of a {\em folded ribbon knot} was introduced by L. Kauffman \cite{Kauf05}.  Figure~\ref{fig:tref-pent-stick} shows a folded ribbon trefoil knot in the center, while on the right we see the ``tight'' pentagonal shape well known to recreational mathematicians. 

\begin{center}
    \begin{figure}[htbp]
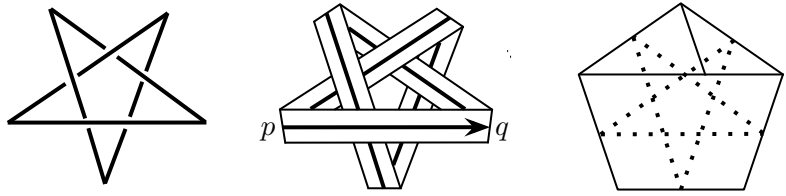

        \begin{overpic}{Trefoil-pentagon-stick}
        \put(36,6){$p$}
        \put(60.5,6){$q$}
        \end{overpic}
        \caption{On the left, a polygonal knot diagram for a trefoil knot. In the center, the corresponding folded ribbon trefoil knot. On the right, the folded ribbon trefoil has been ``pulled tight" to give a pentagonal shape. }
        \label{fig:tref-pent-stick}
    \end{figure}
\end{center}

Before we give a formal definition of a folded ribbon knot, we remind readers that a {\em polygonal knot diagram} is the image of a piecewise linear immersion of $K:S^1\rightarrow\R^2$ with consistent crossing information. We use $K$ to denote both the map and its image in $\R^2$. We denote the finite number of vertices in the polygonal diagram by $v_1, v_2, \dots, v_n$ and the edges connecting consecutive vertices by $e_1=[v_1,v_2],\dots, e_n=[v_n,v_1]$. If the knot is oriented, we assume the labeling of the vertices follows the orientation. 

When we consider folded ribbon knots (defined formally below), we see that near a fold line there is a choice of which  piece of ribbon lies over the other. We say there is an {\em overfold} at vertex $v_i$ if the ribbon corresponding to edge $e_i$ lies over the ribbon of edge $e_{i-1}$.  Similarly, an {\em underfold} if the ribbon corresponding to edge $e_i$ lies under the ribbon corresponding to edge $e_{i-1}$. In the center image in Figure~\ref{fig:tref-pent-stick}, there is an overfold at vertex $p$ and an underfold at vertex $q$. This choice of over or underfold at each fold line of $K_w$ is called the {\em folding information}.  We construct a folded ribbon link from a polygonal link diagram as follows (see also \cite{Den-FRS,DKTZ}).

\begin{definition}\label{def:FR} Given an oriented polygonal knot diagram $K$,  we define the {\em fold angle} $\theta_i$ at vertex $v_i$ (where $0\leq \theta_i\leq \pi$) to be the angle between edges $e_{i-1}$ and $e_i$. The {\em oriented folded ribbon knot of width $w$}, denoted $K_w$ is constructed as follows.
\begin{compactenum}
\item At each vertex we place a fold line that acts like a mirror segment. That is at each vertex $v_i$, if the fold angle $\theta_i<\pi$, place a fold line of length $w/\cos(\frac{\theta_i}{2})$ centered at $v_i$ perpendicular to the angle bisector of $\theta_i$. If $\theta_i=\pi$ at vertex $v_i$, there is no fold line.
\item Add in the ribbon boundary lines parallel to each edge $e_i$ by joining the ends of the fold lines at $v_i$. By construction, each boundary line is parallel to and at distance $w/2$ from $K$.
\item The ribbon has no singularities (is immersed) except at the fold lines, which are assumed to be disjoint.
\item $K_w$ has consistent crossing information, which agrees with the crossing information given by the knot diagram $K$ and with the folding information at each vertex.
\item The ribbon inherits the orientation from $K$.
\end{compactenum}

A {\em folded ribbon link $L_w$} is defined similarly, as each component is a folded ribbon knot.
\end{definition}

We observe that for a given polygonal knot diagram, if the width is too large, then the fold lines of consecutive vertices may touch. In \cite{Den-FRF}, we prove that every polygonal link diagram has a corresponding folded ribbon link for a small enough width. 

A natural question is to try to work out the least amount of ribbon needed to tie any folded ribbon link. This is the folded ribbonlength problem. To model this mathematically, we need a scale-invariant quantity, called  the folded ribbonlength.

\begin{definition} The {\em folded ribbonlength} $\Rib(L_w)$ of a folded ribbon link $L_w$ is the quotient of the length of the link diagram $L$ to the width $w$ of the ribbon, that is $\Rib(L_w) =\frac{\Len(L)}{w}$. The {\em (infimal) folded ribbonlength} of a link type $[L]$ is defined to be 
$$ \Rib([L])= \inf_{L_w\in[L]}\Rib(L_w).$$
\end{definition}
To simplify computations, we often assume that our folded ribbon link has width $w=1$. The folded ribbonlength is then just the length of the link. Observe that we just consider the link type when finding the infimal folded ribbonlength for a link. We do not consider the topological type of the ribbon (M\"obius band or annulus), nor do we consider the framing of the ribbon around the knot. 

When computing folded ribbonlength it is helpful to use the local geometry and trigonometry of folds and crossings --- complete details can be found in \cite{Den-FRF}.  As an example of this, we observe that locally, a fold in a piece of ribbon consists of two overlapping triangles, while a crossing consists of two overlapping parallelograms. The ribbonlength of any square ribbon segment is $1$. If we fold a piece of ribbon so that the fold angle $\theta = \nicefrac{\pi}{2}$, we get two overlapping isosceles right triangles, each with a ribbonlength of $\nicefrac{1}{2}$. This $\nicefrac{\pi}{2}$-fold has a total ribbonlength of $1$.

In order to find upper bounds on the infimal folded ribbonlength of a link type, one simply needs to construct a folded ribbon link and compute the folded ribbonlength.  Many authors \cite{Den-FRF, Den-TP, Kauf05, KMRT, KNY-TwTorus, KNY-2Bridge,Tian-A} have worked on this strategy over the years, often finding folded ribbonlength of infinite families of knots and links.  In addition, many of these papers give the upper bounds in terms of the crossing number\footnote{ The crossing number $\Cr(L)$ of a link $L$ is a knot invariant which is defined to be the minimum number of crossings in any regular projection of the link.} $\Cr(L)$ of the link $L$.  

The relationship between the infimal folded ribbonlength of a knot type $[K]$ and its crossing number $\Cr(K)$ is a well-known problem in the study of folded ribbon knots. The {\em ribbonlength crossing number problem} seeks to establish constants \( c_1 \), \( c_2 \), \( \alpha \), and \( \beta \) such that the following inequality holds for all knot and link types:
\begin{equation}
c_1\cdot \Cr(L)^\alpha \leq \Rib([L]) \leq c_2\cdot \Cr(L)^\beta.
\label{eq:crossing}
\end{equation}
Y. Diao and R. Kusner \cite{Kauf05} conjectured that $\alpha=1/2$ and $\beta=1$, and G. Tian \cite{Tian-A} proved $\beta\leq 2$, while the author \cite{Den-FRC} proved that $\alpha\leq 1/2$ and $\beta\leq 3/2$. However, we now know that the ribbonlength crossing number problem has recently been completely solved!   In 2024, H.~Kim, S.~No, and H.~Yoo~\cite{KNY-Lin} gave a solution for the upper bound in Equation~\ref{eq:crossing} by proving that that $\beta\leq 1$. Specifically, they proved that for any knot or link $L$, 
\begin{equation}\Rib([L])\leq 2.5\Cr(L)+1.
\label{eq:bound}
\end{equation}
In 2025, the author and T. Patterson \cite{Den-TP} gave a solution for the lower bound in Equation~\ref{eq:crossing} for knots by proving that $\alpha=0$. As will be seen below (in (4) and (6)), we proved that the $(2,q)$-torus knots and the twist knots have a uniform upper bound for folded ribbonlength, while the crossing numbers are unbounded. 

Here is a summary of the best upper bounds of folded ribbonlength to date. (Only a few results have been given in terms of crossing number.)

\begin{enumerate}
\item Any link $L$ has $\Rib([L])\leq 2.5\Cr(L)+1$ from \cite{KNY-Lin}.
\item Any trefoil knot $K$ has $\Rib([K])\leq 6$ from \cite{CDPP,Den-FRF}.
\item Any $(p,q)$-torus link $L$ has $\Rib([L])\leq 2p$ where $p\geq q\geq 2$ from \cite{Den-FRF}. This means if $p=aq+b$ for some $a,b\in\Z_{\geq 0}$, then $\Rib([L]) \leq \sqrt{6}(a+\frac{b}{3})(Cr(L))^{1/2}$. 

\item Any $(2,q)$-torus link has 
$$\Rib([T(2,q)])\leq\begin{cases} 8\sqrt{3}\leq 13.86 & \text{for $q$ odd from \cite{Den-TP},}\\
q+3 & \text{ for any $q$ from \cite{CDPP}}.
\end{cases}
$$
\item Any figure-8 knot $K$ has $\Rib([K])\leq 8$ from \cite{CDPP}.
\item Any twist knot $T_n$, with $n$ half-twists has 
$$\Rib([T_n])\leq \begin{cases} 9\sqrt{3}+2 \leq 17.59 & \text{ when $n$ is odd from \cite{Den-TP},} \\
8\sqrt{3} +2 \leq  15.86 & \text{ when $n$ is even from \cite{Den-TP},}\\
n+6  \quad & \text{ for $n\leq 9$ and $11$ from \cite{CDPP}}.
 \end{cases}$$
 \item Any $(p,q,r)$-pretzel link $L$ has $\Rib([L])\leq (|p|+|q|+|r|) + 6$ from \cite{CDPP}.
\item Any $n$-strand pretzel link $L$ constructed from $(p_1,p_2,\dots,p_n)$ half-twists, has $\Rib([L])\leq(\sum_{i=1}^n|p_i|)+2n$ from \cite{CDPP}.
\item Any $2$-bridge knot with crossing number $\Cr(K)$ has $\Rib([K])\leq  2\Cr(K)+2$ from \cite{KNY-2Bridge}.
\item  Any twisted torus knot\footnote{A twisted torus knot $T_{p,q;r,s}$ is obtained from a torus knot $T_{p,q}$  by twisting $r$ adjacent strands $s$ full twists.} $K=T_{p,q;r,s}$ (from \cite{KNY-TwTorus}) has
$$\Rib([K]) \leq \begin{cases} 2(\max\{p,q,r\}+|s|r) \quad \text{for $0<r<p+q$ and $s\neq 0$,}
\\ 2(p+(|s|-1)r) \quad \text{for $0< r\leq p-q$ and $s\neq 0$.}
\end{cases}
$$
\end{enumerate}

We see that many of these results also show that $\beta=1$ in Equation~\ref{eq:crossing} for infinite families of knots and links. Moreover, they improve the upper bound given in Equation~\ref{eq:bound}. We again remark that the $(2,q)$ torus knots and the twist knots both have a uniform upper bound on folded ribbonlength. This surprising result shows that the ribbonlength bound does not depend on the complexity of these knots as captured by the crossing number. 

The primary aim of this paper is to prove that pretzel links\footnote{See Definition~\ref{def:pretzel}.} also have a uniform upper bound on folded ribbonlength, independent of the number of half-twists used in their construction.  The key result needed for this proof, is that we can construct any number of half-twists between two pieces of ribbon in a finite amount of ribbonlength (see Construction~\ref{const:twists}).

\begin{theorem} \label{thm:twists} Assume $k$ is a nonzero integer, and assume that we have folded $k$ half twists between two pieces of ribbon of the same width. Then, ignoring the ends of the ribbons, these $k$ half-twists can be constructed with folded ribbonlength
$$\Rib(k \text{ half-twists})=\frac{12}{\sqrt{3}}+ \epsilon, \text{ for all $\epsilon >0$}.$$
\end{theorem}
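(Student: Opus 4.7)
\emph{Plan.} The goal is, for each nonzero integer $k$ and each $\epsilon>0$, to exhibit an explicit folded ribbon configuration carrying $k$ half-twists between two ribbons of width $w$ whose interior ribbonlength (ignoring the ends) is at most $\tfrac{12}{\sqrt{3}}+\epsilon$. The key point is that the count $k$ has to be absorbed combinatorially rather than geometrically: any construction in which each half-twist uses its own fold would force the ribbonlength to grow linearly in $k$ (this is essentially the pretzel bound from \cite{CDPP} that motivates the theorem). So I would look for a fixed-complexity template in which arbitrarily many crossings can be inserted without additional length beyond $\epsilon$.

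\emph{The template.} The numerical value $\tfrac{12}{\sqrt{3}} = 6\cdot\tfrac{2}{\sqrt{3}}$ strongly suggests building the template from six fold lines each of length $\tfrac{2w}{\sqrt{3}}$. By Definition~\ref{def:FR}, such a fold line arises precisely from a fold with fold angle $\theta=\pi/3$, since $w/\cos(\pi/6)=\tfrac{2w}{\sqrt{3}}$. I would therefore use two polygonal ribbons, each carrying three $\pi/3$-folds, positioned so that their six fold lines bound a narrow twist box along the top and bottom, while their straight segments overlap in the interior. With $w=1$, the fold lines contribute exactly $\tfrac{12}{\sqrt{3}}$ to the total ribbonlength.

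\emph{Packing the $k$ crossings.} The main technical step is to show that $k$ half-twists can be inserted inside this box with total length cost at most $\epsilon$. The strategy is to let the two ribbons run nearly coincident between the folds, so that $k$ crossings appear along a very thin overlap strip whose length along the ribbon can be made as small as desired; the slack $\epsilon$ absorbs both these short straight segments and any small adjustments needed to separate the fold lines. The over/under data at each crossing is inherited from the folding information at the adjacent $\pi/3$-folds, and is prescribed so that the crossings form a consistent pattern of $k$ half-twists rather than one that cancels or tangles incorrectly. Summing the fold-line contribution and the interior contribution gives total ribbonlength $\tfrac{12}{\sqrt{3}}+\epsilon$.

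\emph{Expected obstacle.} The subtle point is verifying that the resulting diagram is an admissible folded ribbon link in the sense of Definition~\ref{def:FR}: the six fold lines must be pairwise disjoint, the ribbon boundaries on the straight segments must avoid spurious self-intersections, and the folding information must be coherent with the crossing data so that the diagram truly encodes $k$ half-twists. I expect this to reduce to a finite trigonometric check on the local geometry of the $\pi/3$-folds together with a combinatorial verification of crossing signs, in the style of the constructions in \cite{Den-FRF, CDPP, Den-TP}. The sign $k>0$ versus $k<0$ is handled by reversing the over/under choices throughout, which is why the statement allows $k$ to be any nonzero integer.
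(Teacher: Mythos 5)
Your plan correctly identifies the shape of the problem (a fixed overhead plus an arbitrarily small per-twist cost), but the step that actually carries the theorem --- ``Packing the $k$ crossings'' --- is asserted rather than constructed, and the mechanism you sketch does not work as stated. Two ribbons of full width $w=1$ running ``nearly coincident'' cannot exchange positions $k$ times inside an arbitrarily short overlap strip: each exchange forces the core curves to cross transversally, and doing this repeatedly with width-$1$ ribbon requires folds whose fold lines (each of length at least $1$) must remain pairwise disjoint by Definition~\ref{def:FR}. The missing idea, which is the heart of the paper's argument, is the \emph{accordion fold} (Definition~\ref{def:folds}): one ribbon is folded back and forth with fold angle $\pi/3$ and an arbitrarily small spacing $d$, turning it into an effectively narrow strip; the second ribbon, also accordion-folded, is then wound around it by ``half-wraps,'' each of which adds only $2d$ to the length of the knot diagram (Construction~\ref{const:twists}, Proposition~\ref{prop:twists}). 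So the number of folds \emph{does} grow linearly in $k$ --- contrary to your opening claim that this must force linear length growth --- but each extra fold costs only $O(d)$, and letting $d\to 0$ produces the $\epsilon$.

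Your accounting of the constant is also off. Ribbonlength is the length of the core knot diagram divided by $w$, not a sum of fold-line lengths, so ``six fold lines of length $2/\sqrt{3}$'' is not where $12/\sqrt{3}$ comes from. In the paper it is $3\cdot d_K(v_S,v_E)=3\cdot\frac{4}{\sqrt{3}}$: the knot-diagram length of three traversals of an \emph{escape accordion} (Lemma~\ref{lem:dist}), which is exactly the overhead needed to thread the ribbon ends into and out of the tightly coiled region. Without the accordion and escape-accordion mechanism there is no way to certify that your ``thin overlap strip'' yields an admissible immersed ribbon with disjoint fold lines, so the proposal has a genuine gap at its central step.
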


In order to construct a pretzel link, we only need a finite amount of ribbonlength to join the half-twists together. Thus we can prove the following. 
\begin{theorem}\label{thm:pretzel} The infimal folded ribbonlength of any $P(p,q,r)$ pretzel link is
$$\Rib([P(p,q,r)])\leq \begin{cases} \frac{55}{\sqrt{3}}  \leq 31.755   & \text{ when $p$, $q$, $r$ have the same parity},
\\  \frac{49}{\sqrt{3}}  \leq  28.291 & \text{ when one of $p$, $q$, $r$ has the opposite parity to the others}.
\end{cases}
$$
\end{theorem}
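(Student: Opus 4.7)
My plan is to build a $P(p,q,r)$ pretzel link by placing three half-twist tangles side by side and closing the diagram with six short connector arcs (three across the top, three across the bottom), so that all of the twisting is absorbed by Theorem~\ref{thm:twists} and only a constant amount of additional ribbon is needed. I would first lay down three parallel polygonal spines in the plane, one per tangle, and apply Theorem~\ref{thm:twists} to the $p$-, $q$-, and $r$-twist regions. This contributes $3\cdot\frac{12}{\sqrt{3}} = \frac{36}{\sqrt{3}}$ to the folded ribbonlength (plus a combined $\epsilon$), independent of $|p|$, $|q|$, and $|r|$.

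Next I would design the six connector arcs. The construction from Theorem~\ref{thm:twists} presumably uses fold angles of $\pi/3$, since this is the source of the $\sqrt{3}$ in the denominator; reusing the same fold angle for the connectors makes the ribbon widths align at each seam and keeps the ribbon non-singular there. The connector budget I have to hit is $\frac{19}{\sqrt{3}}$ in the same-parity case and $\frac{13}{\sqrt{3}}$ in the mixed-parity case.

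The parity split comes from the permutation of strands through each tangle: a tangle with an even number of half-twists returns its two strands to the sides they entered on, while a tangle with an odd number swaps them. When $p$, $q$, and $r$ have the same parity, all three tangles induce the same permutation, so one pair of connector arcs on each end must cross over another connector in a controlled way, which is exactly what eats the extra $\frac{6}{\sqrt{3}} = 2\sqrt{3}$ of ribbon. When exactly one of $p,q,r$ has the opposite parity, that tangle can be positioned so its connectors avoid the detour and the saving of $\frac{6}{\sqrt{3}}$ is realized.

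The main obstacle will be verifying, via careful Euclidean geometry, that this design actually yields a valid folded ribbon diagram of the correct pretzel link: the vertex positions, edge lengths, and fold angles have to be chosen so that the fold lines are disjoint, the ribbon boundaries do not illegally cross into neighboring tangles, and the crossings that appear are exactly those prescribed by $P(p,q,r)$ with the correct over/under information recorded as folding information at each vertex. The parity case analysis, together with the consolidation of the three separate $\epsilon$ contributions from Theorem~\ref{thm:twists} into a single infimum, is where the bookkeeping will be most delicate.
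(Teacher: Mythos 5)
Your high-level strategy --- absorb all the twisting into three applications of Theorem~\ref{thm:twists} at cost $\frac{36}{\sqrt{3}}$ and then close the diagram with a bounded amount of connecting ribbon --- is the right one, and your back-computed connector budgets of $\frac{19}{\sqrt{3}}$ and $\frac{13}{\sqrt{3}}$ do match what the paper's joins actually cost. But the proposal stops exactly where the proof begins. The entire content of the theorem is the explicit design of the connectors and the verification that they cost those amounts; you defer this to ``careful Euclidean geometry'' that is never carried out, and the budgets are reverse-engineered from the target rather than derived from any construction. Without that geometry you have shown at most that \emph{some} uniform bound exists, not the specific constants $\frac{55}{\sqrt{3}}$ and $\frac{49}{\sqrt{3}}$.

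Moreover, your proposed layout --- three tangles side by side, joined by six arcs across the top and bottom --- is not the one that achieves these constants, and it is not clear it can. The paper instead \emph{stacks} the three strands of half-twists on top of one another, concertina style, positioning them so that the start vertices and the odd fold lines of consecutive strands coincide. This is what makes the joins cheap: after stacking, ends such as $A_q$ and $C_p$ literally coincide and can be fused with a single fold of fold angle $0$ at cost $\frac{1}{\sqrt{3}}+2d$ (Lemma~\ref{lem:foldline0}), while the remaining ends are brought together by two $\pi/3$-folds whose lengths are computed from Lemmas~\ref{lem:dist} and~\ref{lem:triangle}. In a side-by-side layout each connector must traverse the gap between tangles and negotiate extra folds, and you would have to redo all of these length computations to see what bound results. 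Your explanation of the parity split is also not the actual mechanism: the saving in the mixed-parity case does not come from connectors avoiding a crossing detour, but from the fact that for an even number of half-twists the ends $B$ and $D$ of the strand emerge on the same side of the accordion stack (Construction~\ref{const:twists}, Case 4), so that for instance $B_p$ and $B_q$ already coincide and need only a fold-angle-$0$ join, whereas in the all-odd case the corresponding ends sit on opposite fold lines and require the longer two-fold detour of Join~1, Steps~4--6.
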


It turns out that we can generalize the construction for a folded ribbon $P(p,q,r)$ pretzel link, to a construction for a folded ribbon $n$-strand pretzel link $P(p_1,p_2,\dots ,p_n)$. Here, the upper bound most likely will not be close to the actual infimum. 

\begin{theorem}\label{thm:pretzel-mi} The infimal folded ribbonlength of any $n$-strand pretzel link type $P(p_1,p_2,\dots p_n)$ is 
$$ \Rib([P(p_1,p_2,\dots p_n)]) \leq \frac{18n+1}{\sqrt{3}}.$$
\end{theorem}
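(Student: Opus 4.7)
The plan is to mimic the construction that proved Theorem~\ref{thm:pretzel} for $P(p,q,r)$ and scale it up to an arbitrary number of strands, using Theorem~\ref{thm:twists} as a modular black box. First I would draw the $n$-strand pretzel diagram as $n$ vertical twist tangles (each containing $p_i$ half-twists) arranged in a row, with horizontal connector arcs joining the top of each tangle to the top of the next, and similarly at the bottom; the last tangle is closed back to the first by a pair of longer arcs. Each of the $n$ twist tangles will be built by directly applying Theorem~\ref{thm:twists}, while the connector arcs will be built from a small fixed number of $\pi/3$-folds, using the same equilateral-triangle geometry that is responsible for the $\tfrac{12}{\sqrt{3}}$ cost in Theorem~\ref{thm:twists} (each fold line then has length $\tfrac{2}{\sqrt{3}}$ and each triangular segment contributes a multiple of $\tfrac{1}{\sqrt{3}}$).

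For the ribbonlength accounting I would verify that (a) each of the $n$ twist tangles contributes exactly $\tfrac{12}{\sqrt{3}} + \epsilon_i$ by Theorem~\ref{thm:twists}; and (b) the top and bottom connector arcs, together with the two closing arcs, contribute a constant total that is independent of the $p_i$ and equal to $\tfrac{6n+1}{\sqrt{3}}$. Summing these contributions gives
\[
\Rib([P(p_1,\dots,p_n)]) \;\leq\; \tfrac{12n}{\sqrt{3}} + \tfrac{6n+1}{\sqrt{3}} + \sum_{i=1}^n \epsilon_i \;=\; \tfrac{18n+1}{\sqrt{3}} + \sum_{i=1}^n \epsilon_i,
\]
and taking each $\epsilon_i \to 0^+$ yields the stated infimal bound. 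A useful consistency check here is that at $n=3$ this collapses to $\tfrac{55}{\sqrt{3}}$, matching the worst-case half of Theorem~\ref{thm:pretzel}.

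The main obstacle will be arguing that this local recipe actually assembles into an embedded folded ribbon link realizing the desired pretzel link type. This amounts to checking three things simultaneously: (i) the fold lines and ribbon boundaries remain pairwise disjoint, which I would arrange by making the vertical scale of each twist tangle large compared with the horizontal spacing, at the cost of an arbitrarily small length increase that is absorbed into the $\epsilon_i$; (ii) the over/underfold information at each vertex is consistent with the crossing information of the pretzel diagram for every choice of sign of $p_i$, which I expect to handle by the same local swap of fold data used in Construction~\ref{const:twists}; and (iii) the connectors can be routed uniformly regardless of the parity of each $p_i$, possibly by a parity-dependent reflection of the connector pattern analogous to the case split in Theorem~\ref{thm:pretzel}. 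The stated bound $\tfrac{18n+1}{\sqrt{3}}$ is the worst-case parity arrangement; a mixed-parity improvement (in the spirit of the second case of Theorem~\ref{thm:pretzel}) is not required here, which is what makes the general $n$-strand statement cleaner than the $n=3$ statement.
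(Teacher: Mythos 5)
Your overall plan --- build each of the $n$ twist regions with the finite-ribbonlength half-twist construction and then pay a separate, $p_i$-independent cost for the connectors --- is the same as the paper's, and your arithmetic $\frac{12n}{\sqrt{3}}+\frac{6n+1}{\sqrt{3}}=\frac{18n+1}{\sqrt{3}}$ matches. But the step you label (b), that the connectors cost exactly $\frac{6n+1}{\sqrt{3}}$, is the entire content of the theorem beyond Theorem~\ref{thm:twists}, and you assert it rather than derive it; the number is reverse-engineered from the target. In the paper this figure arises as $(n-1)\cdot\frac{6}{\sqrt{3}}$ for the $n-1$ adjacent joins plus $\frac{7}{\sqrt{3}}$ for the single closing join, and each of those constants is computed in Joins 1--3 of Constructions~\ref{const:pretzel-odd} and~\ref{const:pretzel-parity} using Lemmas~\ref{lem:triangle} and~\ref{lem:foldline0}.

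More importantly, the geometric layout you propose --- the $n$ tangles placed side by side in a row, closed up ``by a pair of longer arcs'' --- does not obviously support that accounting. Each accordion-folded tangle has planar width about $\frac{2}{\sqrt{3}}$ even as $d\to 0$ (Lemma~\ref{lem:dist}), so in a row arrangement the two closing arcs from the $n$th tangle back to the first each have length growing like $\frac{2(n-1)}{\sqrt{3}}$, and you would have to re-budget everything to see whether the total still lands at $\frac{6n+1}{\sqrt{3}}$. The paper avoids this entirely by the concertina stacking of Figure~\ref{fig:pretzel-fold}: every other strand is flipped and the strands are placed on top of one another with coinciding odd fold lines and start vertices, so that \emph{every} join, including the closing one, is local and costs a fixed $\frac{6}{\sqrt{3}}+O(d)$ or $\frac{7}{\sqrt{3}}+O(d)$. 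That stacking idea is the missing ingredient in your proposal; without it (or an explicit computation showing your row layout still fits the budget), the bound is not established.
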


Intuitively, we expect the amount of material we use to tie a physical link to increase, as the link's complexity increases. Thus finding there is a bounded amount of folded ribbonlength is surprising.
Note that the pretzel link family contains both knots and links. This means we can now prove the lower bound in Equation~\ref{eq:crossing} for {\em links} (previously this was only proven for knots).  Simply by observing that the crossing number of pretzel links can be arbitrarily large while the folded ribbonlength is constant, we have proved the following. 

\begin{theorem}\label{thm:pretzel-bound} Given any link $L$ and positive constant $c_1$, the equation $c_1\cdot\Cr(L)^\alpha\leq \Rib([L])$, must have $\alpha=0$.
\end{theorem}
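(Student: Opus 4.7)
The plan is to use Theorem~\ref{thm:pretzel} against the fact that the crossing number within the pretzel family can be made arbitrarily large. The idea is to produce an explicit infinite subfamily of pretzel links whose folded ribbonlength is uniformly bounded but whose crossing numbers diverge; this is immediately incompatible with any positive exponent $\alpha$ in a lower bound of the form $c_1\cdot\Cr(L)^\alpha\leq \Rib([L])$.

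First I would fix the family $\{P(p,p,p) : p\geq 2\}$. Since all three parameters have the same parity, Theorem~\ref{thm:pretzel} gives $\Rib([P(p,p,p)])\leq \frac{55}{\sqrt{3}}$ for every $p$, a bound independent of $p$. Separately, the standard pretzel projection of $P(p,p,p)$ is a reduced alternating diagram with $3p$ crossings, so by the Kauffman--Murasugi--Thistlethwaite resolution of the Tait conjecture, $\Cr(P(p,p,p))=3p$. In particular the crossing numbers in this family are unbounded.

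I would then close out by contradiction. If there existed $c_1>0$ and $\alpha>0$ with $c_1\cdot\Cr(L)^\alpha\leq \Rib([L])$ for every link $L$, substituting $L=P(p,p,p)$ would force
$$c_1\cdot(3p)^\alpha \;\leq\; \Rib([P(p,p,p)]) \;\leq\; \frac{55}{\sqrt{3}}$$
for every $p\geq 2$, which fails as $p\to\infty$. Hence any admissible exponent satisfies $\alpha\leq 0$, and since $\alpha=0$ is realized by any sufficiently small positive $c_1$ (using the universal positive lower bound on folded ribbonlength of any link), $0$ is the largest possible exponent. The only ingredient required beyond Theorem~\ref{thm:pretzel} is the crossing-number identity $\Cr(P(p,p,p))=3p$, which is standard; there is no real obstacle to the argument, as all the heavy lifting has already been done in Theorems~\ref{thm:twists} and~\ref{thm:pretzel}.
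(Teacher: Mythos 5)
Your proposal is correct and is essentially the paper's own argument: the paper likewise observes that the crossing number of pretzel links is unbounded (via the reduced alternating diagram fact in Remark~\ref{rmk:pretzel}, which gives $\Cr(P(p,q,r))=|p|+|q|+|r|$ for same-sign parameters) while Theorem~\ref{thm:pretzel} bounds their folded ribbonlength uniformly, forcing $\alpha=0$. Your choice of the explicit subfamily $P(p,p,p)$ and the appeal to the Tait conjecture just make the same reasoning more concrete.
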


As discussed above, there have been many papers giving upper bounds on infimal ribbonlength of particular knot and link families. These bounds have not been gathered into one place and applied to small crossing knots. Thus, the second aim of this paper is to give a table showing the best known upper bounds on folded ribbonlength for each knot type. This summary is for knots with crossing number $\leq 9$ and several link types.  These tables are found in Section~\ref{sect:table}. 

We end our introductory remarks by observing that the ribbon gives a framing around the link diagram and we can study this in addition to the link type when considering folded ribbonlength. However, we will not pursue this approach here. Instead we note that there have been a number of results about framed unknots in \cite{RES-Brown, Den-FRLU, Den-TP, Hen,RES-Mont, RES-Mob2, RES-Mob}. In some of these citations, the reader will see that folded ribbon unknots have been used to give results about infimal aspect ratio of embedded multi-twist M\"obius bands and annuli. This is a new development given the recent results of R.E.~ Schwartz and his coauthors. We close by noting that the framed nontrivial knot case remains open.

The paper is arranged as follows. In Section~\ref{sect:pretzel}, we review the definition of a half-twist in two arcs of a knot. These are the building blocks of pretzel links, which we define in Definition~\ref{def:pretzel}. We also review several properties of pretzel links needed later. In Section~\ref{sect:twists} Definition~\ref{def:folds}, we introduce the key idea of an accordion fold and then describe the geometry and distances of these folds. In Construction~\ref{const:twists}, we use accordion folds to construct $k$ half-twists in two equal width pieces of ribbon. We then compute the folded ribbonlength of Construction~\ref{const:twists} to prove Theorem~\ref{thm:twists}. We end Section~\ref{sect:twists} with two technical lemmas giving the folded ribbonlength of certain folding patterns that will be used when constructing pretzel links. In Section~\ref{sect:ribbon}, we construct $P(p,q,r)$ pretzel links using the half-twists from Construction~\ref{const:twists}. We split the construction into two parts, Construction~\ref{const:pretzel-odd} is when $p$, $q$, $r$ have the same parity, and  Construction~\ref{const:pretzel-parity} is when one of $p$, $q$, $r$ is of opposite parity to the other two. We prove Theorem~\ref{thm:pretzel} by computing the folded ribbonlength of these constructions. We then generalize these constructions to $n$-strand pretzel links to prove Theorem~\ref{thm:pretzel-mi}. Finally, in Section~\ref{sect:table}, we give several tables showing the best known upper bound on the infimal folded ribbonlength for all knots with $\leq 9$ crossings.

We encourage the reader to locate some long strips of paper with which they can recreate the constructions found in this paper. The constructions will make more sense with a physical model in hand.

\section{Half-twists and pretzel links} \label{sect:pretzel}

One of the defining characteristics of pretzel links is that they are made from strands of half-twists that are joined together in a specific way.  A {\em strand of $k$ half-twists} is constructed by taking two parallel straight arcs and rotating the bottom ends of the arcs by $\pi$ radians. If $k$ is positive, then ends of the arcs are twisted in a counterclockwise direction and if $k$ is negative, the ends of the arcs are twisted in a clockwise direction. There is a choice here, and other texts may use the opposite convention.  In knot diagrams, a strand of $k$ half-twists is sometimes replaced by a box with the integer $k$. This is shown on the left in Figure~\ref{fig:pretzel-box}, while on the right we see three strands of differently oriented half-twists joined to give a $P(3,1, -2)$ pretzel knot.

\begin{center}
    \begin{figure}[htbp]
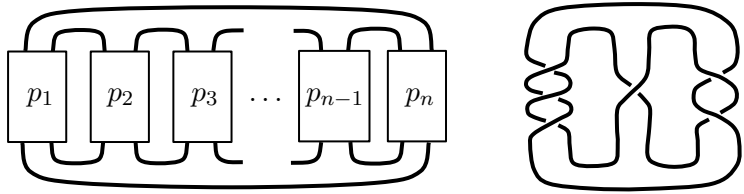

        \begin{overpic}{Pretzel-box-m}
        \put(4,12){$p_1$}
        \put(14.5,12){$p_2$}
        \put(25.5,12){$p_3$}
        \put(33,12){$\dots$}
        \put(40.5,12){{$p_{n-1}$}}
        \put(53.5,12){$p_n$}
        \end{overpic}
        \caption{On the left, an $n$ strand $(p_1,p_2,\dots, p_n)$-pretzel link, and on the right a $P(3, 1, -2)$ pretzel knot.}
        \label{fig:pretzel-box}
    \end{figure}
\end{center}

\begin{definition}\label{def:pretzel} An {\em $n$-strand pretzel link} has $n$ strands of half-twists $(p_1, p_2, \dots , p_n)$ which are connected cyclically, as shown in Figure~\ref{fig:pretzel-box}.  A 3-strand pretzel link is usually just called a pretzel link, and is normally denoted by $P(p,q,r)$.
\end{definition}

\begin{remark}\label{rmk:pretzel}
We review some well-known facts about pretzel links (see for instance \cite{Crom, JohnHen}).
\begin{compactenum}
\item A pretzel link $P(p,q,r)$ is a knot if and only if at most one of $p$, $q$ and $r$ are even. 
\item A pretzel link $P(p,q,r)$ is alternating if and only if $p,q$ and $r$ have the same sign.
\item The mirror image $P^m(p,q,r)$ of a pretzel link is equivalent to $P(-p,-q,-r)$.
\item A pretzel link $P(p,q,r)$ is equivalent to a pretzel link with any permutation of $p$, $q$ and $r$. For example, $P(p,q,r) \cong P(p,r,q) \cong P(q,r,p)$.
\item The crossing number of a pretzel link can be hard to find in general. We have the following specific cases:
\begin{compactenum} 
\item If $p,q,r$ have the same sign, then $P(p,q,r)$ has a reduced alternating diagram, so $\Cr(P(p,q,r))=|p|+|q|+|r|$.
\item  The crossing number of a pretzel link $P(-p,q,r)$ where  $p,q,r\ge2$ is given by $\Cr(P(-p,q,r))=p+q+r$ (see \cite{LeeJin}).
\end{compactenum}
\end{compactenum}
\end{remark}


\section{Accordion folds and half-twists} 
\label{sect:twists}
In this section, we show how to fold a strand of $k$ half-twists out of two pieces of ribbon in an efficient way, and compute how much folded ribbonlength is needed for this construction.  In particular, we introduce the notation needed to describe folded ribbon half-twists, as well as several technical results used in the construction of a pretzel link. 

To get started, we first change our perspective on half-twists by straightening one of the arcs, and imagining the other arc as wrapping around the first. See Figure~\ref{fig:half-twists-new}. We now imagine creating these $k$ half-twists out of pieces of ribbon.  If we could make the ribbon corresponding to the straight strand very narrow, then we could then coil a second ribbon around the first in a way which builds the folded ribbon $k$ half-twists in a finite amount of ribbon.  There are two technical issues here: (a) folding a narrow ribbon and (b) working out how to locate the end of the ribbon inside the coil. 

\begin{center}
\begin{figure}[htbp]
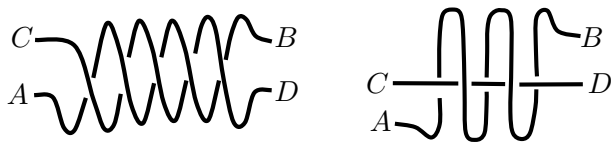

\begin{overpic}{Half-twists-new}
\put(9,5.5){$A$}
\put(44,13){$B$}
\put(9.5,13){$C$}
\put(44,6){$D$}
\put(56.5,2){$A$}
\put(84,13){$B$}
\put(56,7){$C$}
\put(85,7){$D$}
\end{overpic}
\caption{Two different viewpoints of $5$ half-twists.}
\label{fig:half-twists-new}
\end{figure}
\end{center}

The solution is to take the strip of ribbon and fold it back and forward like an accordion. This firstly creates a narrow strip and secondly, creates a way to reach the inside end of a tightly coiled ribbon. In 2024, A. Hennessey introduced these ideas in  \cite{Hen}, and they were further developed by the author and Patterson in \cite{Den-TP}.

%

\begin{definition}[Accordion fold] \label{def:folds} 
Take an oriented piece of  ribbon. As shown in Figure~\ref{fig:accord-details}, an {\em accordion fold with fold angle $\theta$ and distance $d$} is an alternating sequence of left then right overfolds, each with fold angle $\theta$ and at equal distance to one another. Thus in an accordion fold, there is a constant distance $d$ between each vertex of the corresponding knot diagram.

An {\em escape accordion} is an accordion fold with an even number of folds, say $k=2m$ of them, such that the parallel pieces of ribbon entering and leaving the accordion fold are at least distance $w=1$ apart. 
\end{definition}

\begin{center}
\begin{figure}[htpb]
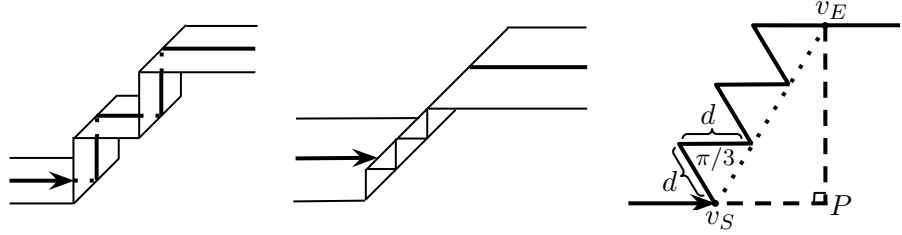

\begin{overpic}{Accordion-details}
\put(75.5,-1.5){$v_S$}
\put(87,20.5){$v_E$}
\put(88.5,-0.5){$P$}
\put(71,2){$d$}
\put(74.5,4.75){\footnotesize{$\pi/3$}}
\put(75,9){$d$}
\end{overpic}
\caption{On the left, a sequence of accordion folds with fold angle $\pi/2$. In the center, an escape accordion.  On the right, the path of the knot diagram in an accordion fold with fold angle $\pi/3$. The dotted lines in this figure helps us compute the distances connected to an accordion fold.}
\label{fig:accord-details}
\end{figure}
\end{center}

\begin{remark} Note that we could have chosen to construct accordion folds using only underfolds or by folding to the right at the start. {\bf In this paper we assume all accordion folds start with a left overfold.}  We denote the start and end vertices of as escape accordion by $v_S$ and $v_E$ respectively. We denote the distance between the start and end points of the escape accordion along the knot by $d_K(v_S, v_E)$. We denote the planar distance between the start and end points of the escape accordion by $d(v_S, v_E)$.  

In Figure~\ref{fig:accord-details} right, we assume that line segment $v_EP$ is perpendicular to the line segment containing $v_SP$.  Thus for an {\em escape accordion}, we require the distance $d(v_E,P)$ to be at least the width $w=1$. Note that given a particular number of folds $k$, the distance $d$ can be adjusted so that $d(v_E,P)=w=1$.
\end{remark}

Once we have constructed an escape accordion, we can continue to add accordion folds, or we can wrap the ribbon around the escape accordion as shown in Figure~\ref{fig:accord-notation}. 

\begin{definition}[$k$ half-wraps] \label{defn:wrap} Assume we have an escape accordion with fold angle $\theta$ and distance $d$. We construct {\em $+k$ half-wraps} as follows. Start at vertex $v_E$, and fold an alternating sequence of a left underfold followed by a right overfold, each with the same fold angle $\theta$ and the same constant distance $d$ between each vertex of the corresponding knot diagram. At each stage, the ribbon winds around the existing accordion folds. Without loss of generality, we declare these to be positive half-wraps, and $+3$ half-wraps are shown in in Figure~\ref{fig:accord-notation}. In the positive case, the end of the ribbon of $+(2n+1)$ half-wraps finishes behind the accordion fold, and the end of ribbon of the $+2n$ half-wraps finishes in front of the accordion fold. 

To fold negative half-wraps, we start with an escape accordion. This time at vertex $v_E$ we fold an alternating sequence of a left overfold followed by a right underfold. At each stage, the ribbon winds around the existing accordion folds. In the negative case, the end of the ribbon of $-(2n+1)$ half-wraps finishes in front of the accordion fold, and the end of the ribbon of $-2n$ half-wraps finishes behind the accordion fold. 
\end{definition}

\begin{center}
    \begin{figure}[htpb]
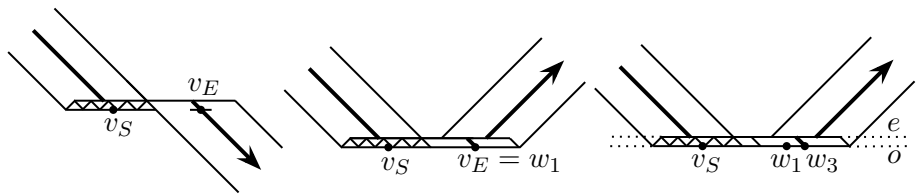

    \begin{overpic}{Accordion-notation}
    \put(13,6.75){$v_S$}
    \put(21.75,11){$v_E$}
    \put(42,2.75){$v_S$}
    \put(50,3){$v_E=w_1$}
    \put(74.5,3){$v_S$}
    \put(82.75,3){$w_1$}
    \put(86.5,3){$w_3$}
    \put(95,3.5){$o$}
    \put(95,6.75){$e$}
    \end{overpic}
    \caption{On the left, an escape accordion with labels. In the center, an escape accordion with $+1$ half-wrap with vertex $v_E=w_1$. On the right, an escape accordion with $+3$  labeled half-wraps.}
    \label{fig:accord-notation}
    \end{figure}
\end{center}

\begin{remark}\label{rmk:notation} We first observe that the knot diagram is the same for a sequence of half-wraps as it is for a sequence of accordion folds.  Secondly, to help keep track of the distances, we label the half-wraps by the vertex of the knot diagram at the start of the half-wrap. Thus the end vertex of the escape accordion is the same vertex as the first wrap, or $v_E=w_1$. We add in labels $w_2, w_3, \dots, w_{2n+1}$ as we add in each half-wrap as  shown in Figure~\ref{fig:accord-notation}.  Thirdly, we also need to keep track of accordion folds after the end vertex $v_E$ of the escape accordion. Thus, we will use the $w_i$ notation to indicate the start of the $i$th accordion fold after the vertex $v_E$. Finally, observe that vertex $w_{2n+1}$ is both the end point of the $2n$th wrap/accordion fold and the start point of the $(2n+1)$th wrap/accordion fold. 
\end{remark}

We need a little more notation which we will use in our constructions later. 

\begin{definition} Assume we have a piece of ribbon with an escape accordion with fold angle $\theta$ and distance $d$. Assume there are additional accordion folds or half-wraps after the escape accordion.  Then the line containing the vertices $v_S$, $v_E$, and odd vertices $w_{2n+1}$ is the {\em odd fold line}, denoted by $o$. The line containing the even vertices $w_2$, \dots, $w_{2n}$ is the {\em even fold line}, denoted by $e$. These lines are shown on the right in Figure~\ref{fig:accord-notation}.
\end{definition}

From this point on we will assume that all of our accordion folds, escape accordions, and half-wraps are made with fold angle $\pi/3$ and distance $d$.  We make this choice, as we have previously shown in~\cite{Den-TP} that an accordion fold constructed with fold angle $\pi/3$ has the smallest amount of ribbonlength.  We now summarize all the distances seen in an escape accordion with additional half-wraps or accordion folds. We measure both the distance along the knot and the distance in the plane between vertices. 

\begin{lemma}\label{lem:dist} Assume we have a piece of ribbon of width $w=1$, and have folded a sequence of accordion folds with fold angle $\pi/3$ and angle $d$ as shown in Figure~\ref{fig:accord-details}. Let $v_S$ and $v_E$ be the start and end vertices of an escape accordion. Assume that additional half-wraps or accordion folds have been added after the escape accordion. Let $w_i$ be the vertex corresponding to the start of the $i$th half-wrap or accordion fold, as described in Remark~\ref{rmk:notation}. We find the following distances between the vertices of the corresponding knot diagram.

\begin{itemize}
\item $d_K(v_S,v_E) =\frac{1}{\cos(\nicefrac{\pi}{6})\sin(\nicefrac{\pi}{6})} = \frac{4}{\sqrt{3}}$, 
\item $d(v_S,v_E) = \frac{1}{\cos(\nicefrac{\pi}{6})}=\frac{2}{\sqrt{3}}$,
 \item $d_K(w_1, w_3) = 2d$,
\item $d(w_1, w_3) = 2d\sin(\nicefrac{\pi}{6}) = d$,
\item $d_K(w_1,w_{2n+1})=2nd$,  and
\item $d(w_1,w_{2n+1}) = 2nd\sin(\nicefrac{\pi}{6}) = nd$.
\end{itemize}

In addition, the distance between the odd fold line $o$ and even fold line $e$ is $\nicefrac{d\sqrt{3}}{2}$.
\end{lemma}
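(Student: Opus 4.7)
The plan is to derive each of the seven identities by elementary trigonometry, exploiting the local geometry of a $\pi/3$ accordion fold. I would set up coordinates so that the incoming ribbon direction is $(+1,0)$ and $v_S$ sits at the origin.

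The key structural fact is that because every fold angle is $\pi/3$ and every knot-diagram edge has length $d$, three consecutive vertices $w_{j-1}, w_j, w_{j+1}$ form an isoceles triangle with two equal sides of length $d$ meeting at apex angle $\pi/3$ at $w_j$; such a triangle is equilateral. From this I immediately obtain $d(w_1, w_3) = d = 2d\sin(\pi/6)$ (a side of the equilateral triangle), $d_K(w_1, w_3) = 2d$ (two knot edges), and the distance between the two fold lines equals the altitude $d\cos(\pi/6) = d\sqrt{3}/2$. The longer-span identities $d_K(w_1, w_{2n+1}) = 2nd$ and $d(w_1, w_{2n+1}) = nd$ then follow by a short induction on $n$, once one observes that $w_1, w_3, \ldots, w_{2n+1}$ lie collinearly on the odd fold line $o$ at constant spacing $d$.

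For the two escape-accordion identities, I would combine the equilateral-triangle geometry with the escape condition $d(v_E, P) = w = 1$. By the accordion structure, the knot path from $v_S$ to $v_E$ traverses two edges through the intermediate fold vertex $v_M$, so $d_K(v_S, v_E) = 2d$ and $d(v_S, v_E) = d$ as one side of the equilateral triangle $v_S v_M v_E$. To pin down $d$, I would drop the perpendicular from $v_E$ to the line along the incoming ribbon direction, meeting it at $P$. In the right triangle $v_S v_E P$, the leg $|v_E P| = 1$ by the escape condition, and the angle at $v_S$ is precisely $\pi/3$: the chord $v_S v_E$ lies along the odd fold line $o$, which is the common direction of the accordion's growth, sitting at angle $(\pi - \pi/3)/2 = \pi/3$ above the incoming direction (equivalently, $o$ is the bisector between the incoming direction and the first outgoing knot edge $v_S v_M$). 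Right-triangle trigonometry then yields
\[
d = d(v_S, v_E) = \frac{|v_E P|}{\sin(\pi/3)} = \frac{1}{\cos(\pi/6)} = \frac{2}{\sqrt{3}},
\]
and consequently $d_K(v_S, v_E) = 2d = \frac{1}{\cos(\pi/6)\sin(\pi/6)} = \frac{4}{\sqrt{3}}$, as claimed.

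The main obstacle is the mildly subtle step of justifying that the odd fold line $o$ meets the incoming ribbon at angle $\pi/3$; once this is seen — either by direct coordinate computation of the net displacement per two-fold period of the accordion, or by the angle-bisector description above — the remaining identities all reduce to elementary right-triangle and equilateral-triangle trigonometry.
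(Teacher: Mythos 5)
Most of your argument is sound and matches the intended geometry: the equilateral-triangle observation correctly yields $d_K(w_1,w_3)=2d$ and $d(w_1,w_3)=d$, the collinearity and constant spacing $d$ of the odd vertices give the $w_{2n+1}$ identities, the altitude computation gives the fold-line separation $d\sqrt{3}/2$, and your identification of the angle between the odd fold line $o$ and the incoming ribbon direction as $(\pi-\pi/3)/2=\pi/3$ is the one genuinely needed angle chase. (The paper itself proves none of this in place; it cites Lemma~6 and Corollary~7 of \cite{Den-TP} and substitutes $\theta=\pi/3$, so a self-contained derivation is a reasonable thing to attempt.)

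However, your treatment of the escape accordion contains a genuine error. You assume the knot path from $v_S$ to $v_E$ consists of exactly two edges through a single intermediate vertex $v_M$, so that $d_K(v_S,v_E)=2d$ and $d(v_S,v_E)=d$; combined with the escape condition this forces $d=2/\sqrt{3}$. But by Definition~\ref{def:folds} an escape accordion has an arbitrary even number $k=2m$ of folds, and by the remark following it the distance $d$ is \emph{adjusted}, for a given number of folds, so that $d(v_E,P)=1$: the entire point, used critically in the proofs of Theorem~\ref{thm:twists} and Theorem~\ref{thm:pretzel1}, is that $d$ can be made arbitrarily small by taking $m$ large. Your conclusion $d=2/\sqrt{3}$ is therefore false in general, and it is inconsistent with the remaining items of the very lemma you are proving, which keep $d$ as a free parameter alongside the fixed values $4/\sqrt{3}$ and $2/\sqrt{3}$. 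The correct count is $d_K(v_S,v_E)=2md$ (the path has $2m$ edges of length $d$, since $v_E$ lies one edge beyond the final fold vertex of the escape accordion) and $d(v_S,v_E)=md$ (each pair of edges advances the path a distance $d$ along $o$); the escape condition $d(v_E,P)=1$ together with your $\pi/3$ angle then pins down the product $md=2/\sqrt{3}$, giving $d(v_S,v_E)=2/\sqrt{3}$ and $d_K(v_S,v_E)=4/\sqrt{3}$ independently of $m$ and of $d$ separately. With that replacement the rest of your argument goes through.
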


\begin{proof} Without loss of generality, we assume distance $d(v_E,P)=1$, then use trigonometry. A proof of these results for any fold angle $\theta$ can be found in \cite{Den-TP} (in Lemma~6 and Corollary~7). Simply substitute in $\pi/3$ for $\theta$.
\end{proof}

We now discuss how to construct $k$ half-twists. We follow the discussion in \cite{Den-TP}, but the half-twists are arranged a little differently here so we can more easily construct pretzel links. When constructing half-twists we will use the notation in Figure~\ref{fig:half-twists-new}, that is strand $AB$ winds around (straight) strand $CD$. 

\begin{const}[$k$ half-twists] \label{const:twists} In this construction, we start with two pieces of oriented ribbon of the same width labeled $AB$ and $CD$. We then construct $k$ half-twists between them as follows.

{\bf Case 1.}  Assume $k=0$. We begin by taking ribbons $AB$ and $CD$ and making accordion folds with fold angle $\pi/3$ and distance $d$ in each ribbon. 
\\ Step 1: We make an escape accordion in ribbon $AB$. This is shown on the left in Figure~\ref{fig:HTw-make}. Next, undo  the first fold of ribbon $AB$ at vertex $v_S$, this is shown in the center of Figure~\ref{fig:HTw-make}. We let vertex $X$ denote the final vertex of the folds that make up the escape accordion. This occurs  at distance $d$ before vertex $v_E$ on the knot diagram.

\begin{center}
    \begin{figure}[htpb]
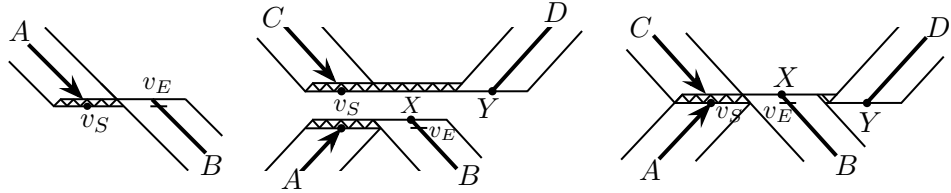

    \begin{overpic}{Half-twist-make}
    \put(1,13.5){$A$}
    \put(21,-0.5){$B$}
    \put(8.5,4.5){$v_S$}
    \put(15,8.5){\small{$v_E$}}
     \put(35,6.25){\small{$v_S$}}
     \put(42,6){\footnotesize{$X$}}
     \put(45,3.5){\footnotesize{$v_E$}}
    \put(29.5,-2){$A$}
    \put(48,-1.5){$B$}
    \put(27.5,15){$C$}
    \put(57,15.5){$D$}
    \put(50, 5.5){$Y$}
     \put(75,5.5){\small{$v_S$}}
     \put(80,5.5){\small{$v_E$}}
    \put(67,-1){$A$}
    \put(87.5,-0.5){$B$}
    \put(66,14){$C$}
    \put(97,14){$D$}
    \put(81,8.75){$X$}
     \put(90,4){$Y$}
    \end{overpic}
    \caption{Escape accordion $AB$ is placed over the long accordion fold $CD$ to create 0 half-twists.}
    \label{fig:HTw-make}
    \end{figure}
\end{center}

\noindent Step 2: We make an escape accordion in the second ribbon $CD$, and follow this with the same number of accordion folds as an escape accordion. We let $Y$ denote the final vertex of these folds and observe that $d_K(v_S, v_E) =d_K(v_E, Y)$. Fold end $D$ up at vertex $Y$ so that ends $C$ and $D$ lie on the same side of the accordion folds. 
\\ Step 3:  Next, move ribbon $AB$ over ribbon $CD$, so that the start  and end vertices ($v_S$ and $v_E$) of each escape accordion coincide and the corresponding odd and even fold lines coincide.  By construction, the end $D$ of ribbon $CD$ lies after the end $B$ of ribbon $AB$ as shown on the right in Figure~\ref{fig:HTw-make}.   There are 0 half-twists between ribbon $AB$ and ribbon $CD$.

\begin{center}
    \begin{figure}[htpb]
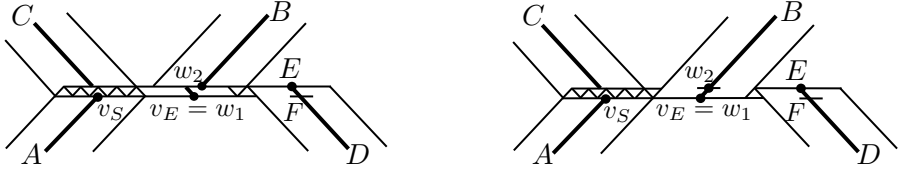

    \begin{overpic}{Half-twist-1}
     \put(12.5,4){\small{$v_S$}}
     \put(18,4){\small{$v_E=w_1$}}
     \put(20.5,8){\small{$w_2$}}
     \put(4.5,-1){$A$}
     \put(30.5,14){$B$}
     \put(3.5,13.5){$C$}
     \put(38.5,-1){$D$}
     \put(31.5, 8){$E$}
     \put(32,3.5){\small{$F$}}
     \put(65.5,3.75){\small{$v_S$}}
     \put(71,4){\small{$v_E=w_1$}}
     \put(74,8){\small{$w_2$}}
    \put(58,-1){$A$}
    \put(84,14){$B$}
    \put(57,13.5){$C$}
    \put(92,-1){$D$}
     \put(84.5, 7.75){$E$}
     \put(84.5,3.5){\small{$F$}}
    \end{overpic}
    \caption{On the left, end $B$ of escape accordion $AB$ is wrapped once under and behind the long accordion fold $CD$ to create a $+1$ half-twist. On the right, end $B$ folds once at the front of accordion fold $CD$ to create  a $-1$ half-twist.}
    \label{fig:HTw-1}
    \end{figure}
\end{center}

{\bf Case 2.} Assume $k=\pm1$.   To construct a $+1$ half-twist, start with 0 half-twists constructed as in Case 1. In ribbon $CD$, add one more accordion fold, and let $E$ denote the vertex at the final accordion fold in end $D$.  In ribbon $AB$,  add one left underfold at vertex $v_E=w_1$ so that end $B$ passes under the accordion folds of ribbon $CD$ as well. By construction, we see that $d_K(v_S,v_E)=d_K(w_2,E)$. This construction is illustrated on the left in Figure~\ref{fig:HTw-1}, and is precisely a $+1$ half-wrap of ribbon $AB$ about ribbon $CD$ as defined in Definition~\ref{defn:wrap}. To see the $+1$ half-twist between ribbons $AB$ and $CD$, simply move ribbon $AB$ down a bit.

To construct a $-1$ half-twist, start with a 0 half-twist as in Case 1.  In ribbon $CD$,  add one more accordion fold, and let $E$ denoted the vertex at the final accordion fold in end $D$. In ribbon $AB$, fold a left overfold at vertex $v_E=w_1$ so that end $B$ passes over the accordion folds of ribbons $AB$ and $CD$.  To see the $-1$ half-twist, simply move ribbon $AB$ down a bit.

\begin{center}
    \begin{figure}[htpb]
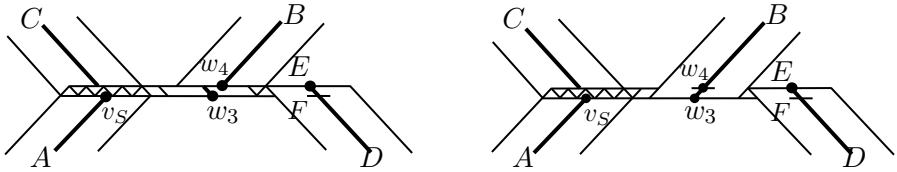

    \begin{overpic}{Half-twist-odd}
     \put(14,4){\small{$v_S$}}
     \put(25,4){$w_3$}
     \put(24.25,9){\small{$w_4$}}
     \put(6.5,-1){$A$}
     \put(33,14){$B$}
     \put(5.5,13.5){$C$}
     \put(41,-1){$D$}
     \put(33.5, 8.5){$E$}
     \put(33.5,4){\small{$F$}}
     \put(64.5,3.75){\small{$v_S$}}
     \put(75,4){$w_3$}
     \put(74.5,8.5){\small{$w_4$}}
    \put(57,-1){$A$}
    \put(83.5,14){$B$}
    \put(56,13.5){$C$}
    \put(91.5,-1){$D$}
     \put(84, 8){$E$}
     \put(84,3.75){\small{$F$}}
    \end{overpic}
    \caption{Ribbon $AB$ is wrapped $2n+1$ times around the long accordion fold of ribbon $CD$. On the left we see a $+3$ half-twist and on the right a $-3$ half-twist.}
    \label{fig:HTw-odd}
    \end{figure}
\end{center}

{\bf Case 3:} Assume $k=\pm(2n+1)$. To construct $+(2n+1)$ half-twists between ribbons $AB$ and $CD$, start with a $+1$ half-twist as constructed in Case 2. In ribbon $CD$ add $2n$ more accordion folds and let $E$ be the vertex corresponding to the final accordion fold. In ribbon $AB$ construct another $+2n$ half-wraps of end $B$ around the accordion folds of $AB$ and $CD$ (as in Definition~\ref{defn:wrap}). There are now $+(2n+1)$ half-wraps ending at vertex $w_{2n+1}$. As in Case 2, we have  $d_K(v_S,v_E)=d_K(w_{2n+2}, E)$. This construction is shown on the left in Figure~\ref{fig:HTw-odd} (but for $+3$ half-twists), and there are now $+(2n+1)$ half-twists between ribbons $AB$ and $CD$.  

On the right in Figure~\ref{fig:HTw-odd} we see the construction of $-(2n+1)$ half-twists  between ribbons $AB$ and $CD$.  The construction is very similar to the positive case. However, we start with a $-1$ half-twist as constructed in Case 2. After adding $2n$ more accordion folds to ribbon $CD$, we then construct another $-2n$ half-wraps of end $B$ around the accordion folds of $AB$ and $CD$. This completes the construction of $-(2n+1)$ half-twists between ribbons $AB$ and $CD$.

\begin{center}
    \begin{figure}[htpb]
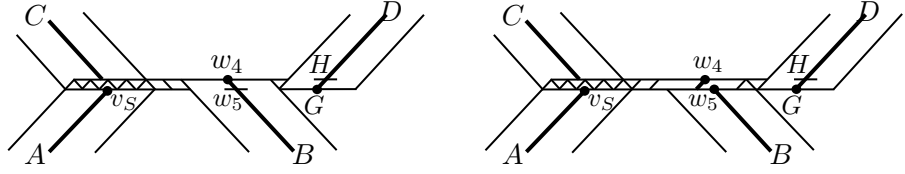

    \begin{overpic}{Half-twist-even}
     \put(13,5){\small{$v_S$}}
     \put(23.75,5){\small{$w_5$}}
     \put(23.5,9){{$w_4$}}
     \put(4,-1){$A$}
     \put(41,14){$D$}
     \put(4,13.5){$C$}
     \put(32,-1){$B$}
     \put(33.75, 8.5){\small{$H$}}
     \put(33.25,4.25){\small{$G$}}
     \put(63,5){\small{$v_S$}}
     \put(73.5,5){\small{$w_5$}}
     \put(74,9){\small{$w_4$}}
    \put(54,-1){$A$}
    \put(91,14){$D$}
    \put(54,13.5){$C$}
    \put(82,-1){$B$}
     \put(83.75, 8.5){\small{$H$}}
     \put(83.25,4.25){\small{$G$}}
    \end{overpic}
    \caption{Ribbon $AB$ is wrapped $2n$ times around the long accordion fold  of ribbon $CD$. On the left we see a $+4$ half-twist, and on the right,   a $-4$ half-twist.}
    \label{fig:HTw-even}
    \end{figure}
\end{center}

{\bf Case 4:} Assume $k=\pm 2n$. To construct $-2n$ half-twists between ribbons $AB$ and $CD$, start with a $0$ half-twist as constructed in Case 1. In ribbon $CD$ add $2n$ more accordion folds and let $G$ be the vertex corresponding to the final fold in end $D$. In ribbon $AB$ construct $+2n$ half-wraps of end $B$ around the accordion folds of $AB$ and $CD$ (as in Definition~\ref{defn:wrap}). By construction, we see that $d_K(v_S, v_E)=d_K(w_{2n+1}, G)$. This construction is shown on the left in Figure~\ref{fig:HTw-even} (but for $+4$ half-twists), and there are now $+2n$ half-twists between ribbons $AB$ and $CD$.  

On the right in Figure~\ref{fig:HTw-even} we see the construction of $-2n$ half-twists.  The construction is very similar to the positive case. We start with a $0$ half-twist as constructed in Case 1. After adding $2n$ more accordion folds to ribbon $CD$, we then construct $-2n$ half-wraps of end $B$ around the accordion folds of $AB$ and $CD$. This construction creates $-2n$ half-twists between ribbons $AB$ and $CD$.
\qed
\end{const}

\begin{remark}\label{rmk:vertices} Since this will be needed later, we pause emphasize which vertices lie on which lines. 
\begin{itemize}
\item Assume there are $\pm(2n+1)$ half-twists, between ribbons $AB$ and $CD$. Vertices $v_S$, $v_E$, $w_{2n+1}$ and $F$ lie on the odd fold line $o$, and vertices $w_{2n}$ and $E$ lie on the even fold line $e$. 
\item Assume there are $\pm 2n$ half-twists, between ribbons $AB$ and $CD$. Vertices $v_S$, $v_E$, $w_{2n+1}$, $Y$, $G$ lie on the odd fold line $o$, and vertices $X$, $w_{2n}$ $H$ lie on the even fold line $e$. 
\end{itemize}
\end{remark}

\begin{proposition} \label{prop:twists} Assume we have two pieces of ribbon labeled $AB$ and $CD$ with the same width and have folded a sequence of accordion folds with fold angle $\pi/3$ and distance $d$ in each ribbon. Assume for any $k\in\Z$, that we have folded $ k$ half-twists between ribbons $AB$ and $CD$ as in Construction~\ref{const:twists}. Then these $k$ half-twists can be constructed with folded ribbonlength at least $$\Rib(\text{$k$ half-twists})\geq \frac{12}{\sqrt{3}}+ (2|k|-1)d.$$
Equivalently, 
$\Rib(\text{$k$ half-twists})\geq \begin{cases} \frac{12}{\sqrt{3}}+ (4n-1)d & \text{ when $k=\pm 2n$}, \\
\frac{12}{\sqrt{3}}+ (4n+1)d & \text{ when $k=\pm(2n+1)$}.
\end{cases}$
\end{proposition}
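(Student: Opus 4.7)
The plan is to directly compute the folded ribbonlength of the construction in each of the four cases of Construction~\ref{const:twists}. Since the width is $w=1$, this ribbonlength equals the total length of the knot diagram, i.e., the sum of the lengths of the two ribbons $AB$ and $CD$.

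By Lemma~\ref{lem:dist}, each escape accordion contributes $\frac{4}{\sqrt{3}}$ to the knot length, and each additional accordion fold or half-wrap past the escape accordion contributes $d$. First I would handle Case 1 ($k=0$): ribbon $AB$ consists of an escape accordion with the first fold at $v_S$ undone, and ribbon $CD$ consists of an escape accordion followed by a matching number of accordion folds, together with a fold of end $D$ at vertex $Y$. The naive count gives three escape-accordion lengths, totaling $\frac{12}{\sqrt{3}}$. Careful bookkeeping of the boundary pieces --- specifically that vertex $X$ lies at distance $d$ before $v_E$ on $AB$, so undoing the first fold at $v_S$ shortens $AB$'s knot length by $d$, while the fold-up of $D$ at $Y$ on $CD$ contributes the compensating boundary length --- yields the Case 1 total $\frac{12}{\sqrt{3}} - d$, which matches the formula at $|k|=0$.

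For Cases 2--4 I would proceed inductively on $|k|$. Each additional half-twist is obtained from the previous configuration by adding one half-wrap to $AB$ and one accordion fold to $CD$, each contributing precisely $d$ along the knot by Lemma~\ref{lem:dist}. Hence after $|k|$ half-twists the total length becomes $\frac{12}{\sqrt{3}} - d + 2|k|d = \frac{12}{\sqrt{3}} + (2|k|-1)d$, which is the stated bound. The two equivalent split formulas follow by separating $|k|=2n$ (Case 4) from $|k|=2n+1$ (Case 3), and the symmetry between positive and negative $k$ --- which merely interchanges over- and underfolds while preserving edge lengths --- justifies using $|k|$ throughout.

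The main obstacle will be the careful geometry of Case 1: verifying that the undoing of the fold at $v_S$ and the fold-up of $D$ at $Y$ correctly combine to produce the $-d$ correction to the naive three-escape-accordion count, and checking that the special vertices listed in Remark~\ref{rmk:vertices} lie on the expected fold lines so that no extra boundary length is hidden. Once Case 1 is pinned down, the induction for larger $|k|$ is a direct application of the $d$-spacing in Lemma~\ref{lem:dist}.
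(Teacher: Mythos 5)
Your proposal takes essentially the same route as the paper: both establish the base case $k=0$ as $3\,d_K(v_S,v_E)-d=\frac{12}{\sqrt{3}}-d$ via Lemma~\ref{lem:dist} and then induct on $|k|$, each additional half-twist contributing $2d$ (one accordion fold on $CD$ plus one half-wrap on $AB$). One small quibble on attribution in Case 1: the $-d$ arises because the measurement of ribbon $AB$ ends at the final fold vertex $X$, which lies at distance $d$ before $v_E$ along the knot diagram (the measurement still starts at $v_S$ whether or not the first fold there is undone), but your net count and the resulting formula are correct.
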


Note that this proof has a similar flavor to the ones found in \cite{Den-TP}. However, the half-twists are slightly different, so we include the proof in full here.

\begin{proof} We compute the ribbonlength of the two pieces of ribbon labeled $AB$ and $CD$ used in Construction~\ref{const:twists}.  We measure each piece starting at vertex $v_S$ and ending at the final vertex of the accordion folds/wraps. We ignore any ribbon after these final vertices. We use Lemma~\ref{lem:dist} throughout this proof. 

 Assume $k=0$, and refer to Figure~\ref{fig:HTw-make}. Starting with ribbon $AB$, end $B$ follows the knot diagram from vertex $v_S$ in a zig-zag fashion through the escape accordion to vertex $X$, which we recall is one vertex before vertex $v_E$ in the knot diagram. The distance travels by end $B$ is $d_K(v_S, v_E) - d$.  Next, for ribbon $CD$, end $D$ follows the knot diagram from vertex $v_S$ to then travels in a zig-zag fashion through the escape accordion to vertex $v_E$, and then travels through another escape accordion to vertex $Y$. The distance traveled is $2d_K(v_S, v_E)$.  We now put these together 
 $$\Rib(0 \text{ half-twists}) \geq 3d_K(v_S, v_E) -d = \frac{12}{\sqrt{3}} -d. $$ 
 
 Assume $k=\pm 1$ and refer to Figure~\ref{fig:HTw-1}. To get a $\pm1$ half-twist, we start with the $0$ half-twist case and add one more accordion fold to ribbon $CD$ and one half-wrap  (appropriately oriented) to ribbon $AB$. Thus, starting with ribbon $AB$, end $B$ follows the knot diagram from vertex $v_S$ in a zig-zag fashion through the escape accordion to vertex $v_E=w_1$. The distance travels by end $B$ is $d_K(v_S, v_E)$.  Next, for ribbon $CD$, end $D$ follows the knot diagram from vertex $v_S$ to then travels in a zig-zag fashion through the accordion folds to vertex $v_E$, and then travels through another escape accordion plus one more fold to vertex $E$. The distance traveled is $2d_K(v_S, v_E)+d$.  We now put these together 
 $$\Rib(\pm 1 \text{ half-twists}) \geq 3d_K(v_S, v_E) +d = \frac{12}{\sqrt{3}} +d.$$

 We can proceed by induction. Assume the result is true for $k>0$ half-twists. We add one additional half-twist, by adding one more accordion fold to ribbon $CD$ and by adding one more half-wrap to ribbon $AB$. That is, we increase the ribbonlength by $+2d$. Thus the ribbonlength for $+(k+1)$ half-twists is 
 $$\Rib(k \text{ half-twists})\geq \frac{12}{\sqrt{3}}+ (2k-1)d +2d = \frac{12}{\sqrt{3}}+ (2(k+1)-1)d.$$ 
 A similar proof holds for $k<0$ half-twists, and thus the result holds for all integers.

\end{proof}

\begin{remark} In Proposition~\ref{prop:twists}, we have chosen to measure the folded ribbonlength of the $k$ half-twists from the first fold at vertex $v_S$ to the final fold in each ribbon. We made this choice to simplify later calculations. When computing the folded ribbonlength of a link containing a strand of $k$ half-twists, we can easily measure the folded ribbonlength of the additional ribbon used by starting at these places.
\end{remark}
We can now proof Theorem~\ref{thm:twists}: Assume $k$ is a nonzero integer, and assume that we have folded $k$ half twists between two pieces of ribbon of the same width. Then, ignoring the ends of the ribbons, these $k$ half-twists can be constructed with folded ribbonlength at least
$$\Rib(k \text{ half-twists})= \frac{12}{\sqrt{3}}+ \epsilon, \text{  for all $\epsilon >0$}. $$

\begin{proof}[Proof of Theorem~\ref{thm:twists}] The inequality in Proposition~\ref{prop:twists} comes from considering the additional ribbonlength of the end of each ribbon occurring before the start vertex $v_S$ and after the vertex corresponding to the final fold. If we ignore the ribbonlength of these ends, then once $\epsilon>0$ is chosen, simply set $d$ small enough in Proposition~\ref{prop:twists}. 
\end{proof}

Before we move on to the construction of a folded ribbon pretzel link, we need two more lemmas that will simplify our ribbonlength computations. Part of first lemma initially appeared in \cite{Den-TP} (as part of the proof of Theorem 11). We have added a second case and included the proof here for ease of use.

\begin{center}
    \begin{figure}[htpb]
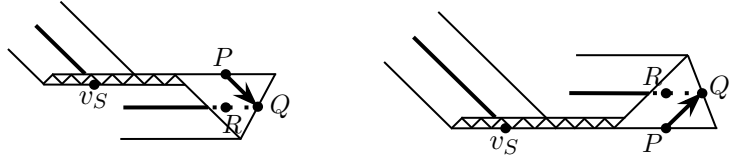

    \begin{overpic}{Accordion-end-fold}
    \put(21,4){$v_S$}
    \put(35,8){\small{$P$}}
    \put(41,3){\small{$Q$}}
    \put(36,1){\small{$R$}}
    \put(64,-0.75){$v_S$}
    \put(80,-1){\small{$P$}}
    \put(87,5){\small{$Q$}}
    \put(80,6){\small{$R$}}
    \end{overpic}
    \caption{The end of the ribbon is folded back towards the start vertex $v_S$ so that one side-edge of the ribbon  coincides with the fold line containing vertex $P$. }
    \label{fig:accord-end-fold}
    \end{figure}
\end{center}

\begin{lemma}\label{lem:triangle} Take a piece of ribbon of width $w=1$. Fold an escape accordion folded with fold angle $\pi/3$ and distance $d$, then fold any number ($\geq 0$) of  additional half-wraps or accordion folds. Let $P$ be the vertex in the knot diagram corresponding to the final fold of the half-wraps/accordion folds.  Fold the end of the ribbon at point $Q$ back towards the start vertex $v_S$ with fold angle $\pi/3$ so that one side-edge of the ribbon coincides with the fold line containing point $P$. We let $R$ be the point on the final segment of the knot diagram such that $PR$ is perpendicular to $PQ$.  This construction is illustrated in Figure~\ref{fig:accord-end-fold}.
The distances in the figure are as follows:
\begin{compactitem}
\item $d(P,R)=\frac{1}{2}$
\item $d(P,Q)= \frac{1}{\sqrt{3}}$
\item $d(Q,R) = \frac{1}{2\sqrt{3}}$
\end{compactitem}

\end{lemma}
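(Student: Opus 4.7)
The plan is to exploit the fact that the construction places the vertex $P$ itself on a side-edge of the returning (folded-back) piece of ribbon. Once this is observed, the three distances follow from one right triangle and standard trigonometry.

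First I would set up coordinates so that $P$ is at the origin and the outgoing edge of the knot diagram from $P$ runs along the positive $x$-axis. Because the fold angle at $P$ is $\pi/3$, Lemma~\ref{lem:dist} and the construction in Definition~\ref{def:FR} imply that the fold line at $P$ has length $w/\cos(\pi/6)=2/\sqrt{3}$, is perpendicular to the angle bisector at $P$, and in particular passes through $P$.

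The key observation is that the hypothesis ``one side-edge of the ribbon coincides with the fold line containing $P$'' forces the side-edge of the returning ribbon to pass through the vertex $P$. Since the ribbon has width $w=1$, its side-edges lie at perpendicular distance $w/2=\tfrac{1}{2}$ from the centerline (i.e.\ from the knot diagram). Letting $R$ denote the foot of the perpendicular from $P$ to the centerline of the returning ribbon (the final segment of the knot diagram), this immediately gives
\[
d(P,R)=\tfrac{1}{2}.
\]

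Next, the triangle $PQR$ has a right angle at $R$ by construction, and its angle at $Q$ equals the fold angle at $Q$, namely $\pi/3$. Indeed, $\angle PQR$ is the angle between the incoming edge $\overline{QP}$ and the outgoing edge $\overline{QR}$ along the final segment, and these are exactly the two edges of the knot diagram that meet at $Q$. Thus $PQR$ is a $30$–$60$–$90$ right triangle with the leg opposite the $\pi/3$ angle equal to $\tfrac{1}{2}$, and basic trigonometry yields
\[
d(P,Q)=\frac{d(P,R)}{\sin(\pi/3)}=\frac{1}{\sqrt{3}},\qquad d(Q,R)=\frac{d(P,R)}{\tan(\pi/3)}=\frac{1}{2\sqrt{3}}.
\]

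The only conceptual step is the identification that $P$ lies on a side-edge of the returning ribbon; after that everything reduces to a single right triangle. I do not expect any real obstacle, as there is no estimation to do and no configuration-dependent case analysis (the hypothesis pins down the geometry near the end of the ribbon completely, independent of how many accordion folds or half-wraps precede $P$).
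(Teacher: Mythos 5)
Your proof is correct and is essentially the paper's own argument: both identify the right triangle $\triangle PRQ$ with the right angle at $R$ and angle $\pi/3$ at $Q$, obtain $d(P,R)=\tfrac{1}{2}$ from the width (because the side-edge of the returning ribbon lies along the fold line through $P$, hence at perpendicular distance $w/2$ from the returning centerline), and finish with the same trigonometry. You simply spell out the width step more explicitly than the paper does, and you correctly place the right angle at $R$ --- the foot of the perpendicular from $P$ to the final segment --- which is what the paper's computation actually uses, notwithstanding the statement's phrasing that ``$PR$ is perpendicular to $PQ$.''
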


\begin{proof}
 In both cases, we have a right triangle $\triangle PRQ$, with angle $\angle PQR=\pi/3$. Since the width $w=1$, we have $d(P,R)=\frac{1}{2}$. The hypotenuse has length $d(P,Q) = \frac{1}{2\sin(\pi/3)} = \frac{1}{\sqrt{3}}$, and the final edge has length $d(R,Q)=\frac{1}{2}\tan(\pi/6) = \frac{1}{2\sqrt{3}}$.  We note that while Figure~\ref{fig:accord-end-fold} shows an underfold at vertex $Q$, the same proof works if there is an overfold at $Q$. 
\end{proof}

When constructing folded ribbon knots, we often encounter two overlapping parallel pieces of ribbon which we need to join together. The simplest way to do this is with a fold of fold angle~0. Now fold lines with fold angle~0 are perpendicular to the knot diagram, and  must occur at a minimum distance from the vertex of an existing fold line. Why? By Definition~\ref{def:FR}, fold lines can not intersect in a folded ribbon knot. The next lemma gives this distance.

\begin{center}
    \begin{figure}[htpb]
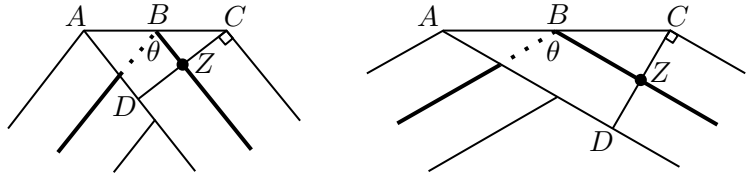

    \begin{overpic}{AcuteVsObtuse-New}
    \put(15,17){$A$}
    \put(24.5, 17.25){$B$}
    \put(24.5, 13){$\theta$}
    \put(33.5,17){$C$}
    \put(20.5,6.5){$D$}
    \put(30,11.5){$Z$}
    \put(56,17){$A$}
    \put(71, 17.25){$B$}
    \put(71, 13){$\theta$}
    \put(85,17){$C$}
    \put(76,2.5){$D$}
    \put(83,10.5){$Z$}
    \end{overpic}
    \caption{Line $CD$ shows the closest location of a fold with fold angle 0 to an existing fold line $AC$.  }
    \label{fig:acute-obtuse}
    \end{figure}
\end{center}

\begin{lemma}\label{lem:foldline0} Following the notation in Figure~\ref{fig:acute-obtuse}, take a piece of ribbon of width $w=1$ with a fold line $AC$ formed from a fold angle $\theta$ at vertex $B$ of the knot diagram.  Assume that line segment $CD$ is perpendicular to the knot diagram and intersects it in point $Z$. Then distance $d(B,Z) = \frac{1}{2}\tan(\frac{\theta}{2})$. In particular, if $\theta=\frac{\pi}{3}$, then $d(B,Z)=\frac{1}{2\sqrt{3}}$.
\end{lemma}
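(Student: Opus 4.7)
The plan is to reduce to a single right triangle and read off $d(B,Z)$ from elementary trigonometry.

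First I would set up coordinates with $B$ at the origin and the outgoing edge of the knot diagram lying along the positive $x$-axis. Because $CD$ is the fold line of a fold with angle $0$ at $Z$, by Definition~\ref{def:FR} it has length $w=1$, is centered at $Z$, and is perpendicular to the knot diagram; so $Z$ lies on the positive $x$-axis and $C$ sits directly above (or below) $Z$ at height $w/2 = 1/2$. Thus $d(Z,C) = 1/2$ and the segment $ZC$ is perpendicular to $BZ$.

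Next I would extract the relevant information from the existing fold line $AC$ at $B$. By Definition~\ref{def:FR}, $AC$ has length $w/\cos(\theta/2)$, is centered at $B$, and is perpendicular to the angle bisector of the fold angle $\theta$. Since $C$ is an endpoint, $d(B,C) = 1/(2\cos(\theta/2))$. The angle bisector makes angle $\theta/2$ with the outgoing edge, and $AC$ is perpendicular to that bisector, so the line through $B$ and $C$ meets the knot diagram at $B$ at an acute angle of $\pi/2 - \theta/2$.

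Now I consider the right triangle $\triangle BZC$, with the right angle at $Z$. The angle at $B$ is $\pi/2 - \theta/2$, forcing the angle at $C$ to be $\theta/2$. Applying $\tan(\angle BCZ) = d(B,Z)/d(Z,C)$ gives
\[
\tan\!\bigl(\theta/2\bigr) = \frac{d(B,Z)}{1/2},
\]
so $d(B,Z) = \tfrac{1}{2}\tan(\theta/2)$. Substituting $\theta = \pi/3$ yields $\tfrac{1}{2}\tan(\pi/6) = \tfrac{1}{2\sqrt{3}}$, which is the second claim.

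The only possible obstacle is bookkeeping on the figure: one needs to justify that $C$ is simultaneously the endpoint of the old fold line $AC$ and an endpoint of the new fold line $CD$ when the two lines are in their closest allowed position (they share an endpoint but do not cross, consistent with the disjointness requirement of Definition~\ref{def:FR}). Once this identification is in place, the calculation is routine trigonometry.
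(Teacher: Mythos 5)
Your proof is correct and follows essentially the same route as the paper: both arguments work in the right triangle $\triangle BZC$ with the right angle at $Z$, identify $\angle ZBC = \frac{\pi}{2}-\frac{\theta}{2}$ and $\angle BCZ = \frac{\theta}{2}$, use $d(C,Z)=\frac{1}{2}$ from the width, and read off $d(B,Z)=\frac{1}{2}\tan(\frac{\theta}{2})$. The extra bookkeeping you supply (coordinates, the perpendicularity of the old fold line to the angle bisector, and the identification of $C$ as the shared endpoint in the closest allowed position) is consistent with, and merely more explicit than, the paper's one-line appeal to ``the geometry of the fold.''
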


\begin{proof} Using the geometry of the fold we see right triangle $\triangle BZC$  triangle has angle $\angle ZBC = \frac{\pi}{2}-\frac{\theta}{2}$ and angle $BCZ=\frac{\theta}{2}$. Since the ribbon has width $w=1$, then $d(C,Z)=\frac{1}{2}$, and distance $d(B,Z) = \frac{1}{2}\tan(\frac{\theta}{2})$.
\end{proof}


\section{Ribbonlength of pretzel links}\label{sect:ribbon}

In this section we give a construction for a folded ribbon pretzel link $P(p,q,r)$, we then prove that this construction has bounded folded ribbonlength (that is, does not depend on $p$, $q$ or $r$).  The construction has two features. First, we fold three strands of $p$, $q$ and $r$ half-twists using Construction~\ref{const:twists}. Second, we join these strands of half-twists together in a way that minimizes the amount of ribbon used to join them. The key idea here is to arrange the three strands of half-twists so they lie on top of one another. We do this concertina style as shown in Figure~\ref{fig:pretzel-fold}. Starting from the left image in Figure~\ref{fig:pretzel-fold}, we fold the $q$ half-twists over to the left so they lie on the $p$ half-twists. We then fold the $r$ half-twists over to the right so they lie over the $q$ half-twists, as shown in the center and right of Figure~\ref{fig:pretzel-fold}.

\begin{center}
    \begin{figure}[htpb]
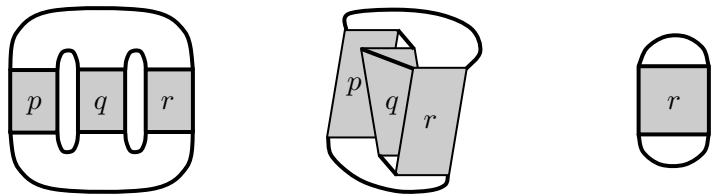

    \begin{overpic}{Pretzel-ribbon-fold}
    \put(12.5,9){$p$}
    \put(19.5,9){$q$}
    \put(26.5,9){$r$}
    \put(46,11){$p$}
    \put(50,9){$q$}
    \put(54,7){$r$}
    \put(79.5,9){$r$}
    \end{overpic}
    \caption{The three strands of half-twists in the $P(p,q,r)$ pretzel link are arranged in a new way. Figure reused with permission from \cite{Den-FRF}.}
    \label{fig:pretzel-fold}
    \end{figure}
\end{center}

Before we begin, we observe that for a $P(p,q,r)$ pretzel link we can reduce the number of possibilities of integers $p$, $q$, $r$ that we need to think about. The length of $k$ half-twists does not change if $k$ is positive or negative, so we can ignore that choice.  As shown in Construction~\ref{const:twists}, the arrangement of the ends of the strand of $k$ half-twists changes depending on whether $k$ is odd or even. We can thus reduce our constructions down to two cases. Case 1: $p$, $q$ and $r$  have the same parity found in Construction~\ref{const:pretzel-odd}. Case 2: one of $p$, $q$, and $r$ has the opposite parity to the other two found in Construction~\ref{const:pretzel-parity}.

We use Construction~\ref{const:twists} to construct the half-twists. As a reminder, this construction uses two equal width pieces of ribbon labeled $AB$ and $CD$. Ribbon $CD$ is folded in a long accordion fold, and ribbon $AB$ is wound around this using a series of half-wraps. We will keep using this notation for the ends of the half-twists, and add a subscript to indicate which half-twist we are referring to. Thus the ends of the $p$ half-twists are labeled $A_p$, $B_p$, $C_p$, $D_p$. 

\begin{remark}\label{rmk:length}  Suppose that integers $p$ and $r$ have the same parity and that $|p|<|r|$.  Suppose we have constructed both $p$ and $r$ half-twists following Construction~\ref{const:twists}. Observe that we can add an even number ($|r|-|p|$) of accordion folds to the ends $B_p$ and $D_p$ so that the $p$ and $r$ half-twists now have the same folded ribbonlength. In particular, vertex $w_r$ is the vertex corresponding to the final fold in both ends $B_p$ and $B_r$.   Thus, this move adds folded ribbonlength to the ends $B_p$ and $D_p$,  but does not change the number of half-twists. We will use this technique later, since this will simplify many constructions without adding too much extra folded ribbonlength. (The amount added is a multiple of the distance $d$ --- which can be made arbitrarily small.)
\end{remark}

\begin{const}\label{const:pretzel-odd} [$P(p,q,r)$ pretzel links --- same parity] We construct a folded ribbon $P(p,q,r)$ pretzel link, where $p$, $q$ and $r$ all have the same parity. 

Assume that $p$, $q$, and $r$ are all odd.  From Remark~\ref{rmk:pretzel} we can assume, without loss of generality, that $|p|\leq |q| \leq |r|$.   Use Construction~\ref{const:twists} to construct three strands of $p$, $q$ and $r$ half-twists out of pieces of ribbon with equal width. Following Remark~\ref{rmk:length}, add additional accordion folds to the ends $B_p$, $D_p$, $B_q$, $D_q$ of the $p$ and $q$ half-twists so that they have the same folded ribbonlength as the $r$ half-twist while maintaining the same number of half-twists.  As mentioned above, we do this to simplify our ribbonlength calculations without adding too much additional folded ribbonlength.

\begin{center}
    \begin{figure}[htpb]
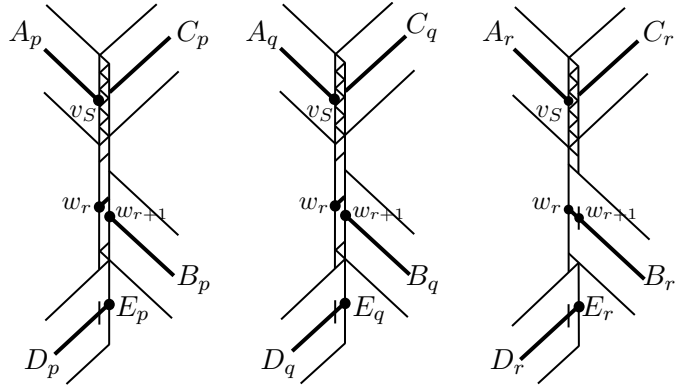

    \begin{overpic}{Pretzel-odd-pqr}
    \put(6, 45){$A_p$}
    \put(28,45){$C_p$}
    \put(28,13){$B_p$}
    \put(8,2){$D_p$}
    \put(14, 35){\small{$v_S$}}
    \put(13,23){\small{$w_r$}}
    \put(20,22){\footnotesize{$w_{r+1}$}}
    \put(20,9){$E_p$}
   \put(37, 45){$A_q$}
    \put(58,46){$C_q$}
    \put(58,13){$B_q$}
    \put(39,2){$D_q$}
    \put(45, 35){\small{$v_S$}}
    \put(44,23){\small{$w_r$}}
    \put(51,22){\footnotesize{$w_{r+1}$}}
    \put(51,9){$E_q$}
    \put(68, 45){$A_r$}
    \put(89,45){$C_r$}
    \put(89,13){$B_r$}
    \put(69,2){$D_r$}
    \put(75, 35){\small{$v_S$}}
    \put(74.5,23){\small{$w_r$}}
    \put(81.5,22){\footnotesize{$w_{r+1}$}}
    \put(81,9){$E_r$}
    \end{overpic}
    \caption{The $p$, $q$ and $r$ half-twists ready to be joined to form a folded ribbon $P(p,q,r)$ pretzel link.}
    \label{fig:pretzel-o-pqr}
    \end{figure}
\end{center}

Figure~\ref{fig:pretzel-o-pqr} shows the three half-twists at this stage of the construction. Looking at the $r$ half-twists, we note that vertex $w_r$ is the vertex corresponding to the final accordion fold of end $B_r$, and vertex $E_r$ is the vertex corresponding to the final accordion fold of end $D_r$. We use similar notation for the $q$ and $r$ half-twists, where $w_r$ is the final vertex in each of the half-twists due to the extra accordion folds.  Figure~\ref{fig:pretzel-o-pqr} also shows strands of half-twists that are positive and negative.

In order to complete the pretzel knot, we join the ends of the strands of half-twists in a cyclic order (as shown in Figure~\ref{fig:pretzel-fold}). Namely, we need to join:
\begin{compactenum}
\item ends $C_p$ and $A_q$, and ends $B_p$ and $D_q$ of the $p$ and $q$ half-twists, 
\item ends $C_q$ and $A_r$, and ends $B_q$ and $D_r$ of the $q$ and $r$ half-twists, 
\item ends $A_p$ and $C_r$, and ends $D_p$ and $B_r$ of the $p$ and $r$ half-twists.
\end{compactenum}

{\bf Join 1:} Joining ends $C_p$ and $A_p$, and ends $B_p$ and $D_q$ of the $p$ and $q$ half-twists. 
\\ 
Step 1: Rotate the strand of $q$ half-twists by $\pi$ radians along the axis of the accordion folds (so they are now upside down). Next, move them so they lie to the left of the $p$ half-twists. This is shown on the left in Figure~\ref{fig:pretzel-o-qp}. Notice in the figure that the odd fold line $o_q$ of the $q$ half-twists is to the right of the even fold line $e_q$, and the odd fold line $o_p$ of the $p$ half-twists is to the left of the even fold line $e_p$. 
\\ 
Step 2:  Place the $q$ half-twists on top of the $p$ half-twists so that the odd fold lines coincide ($o_q=o_p=o$), and so that the start vertices $v_S$ coincide. This is shown on the right in Figure~\ref{fig:pretzel-o-qp}. There is a lot going on in this figure, notice that the vertices $w_r$ of the $q$ and $p$ half-twists also coincide. The ends we need to join together ($C_p$ to  $A_q$,  and $B_p$ to $D_q$) all lie on the right side of the overlapping half-twists.

\begin{center}
    \begin{figure}[htpb]
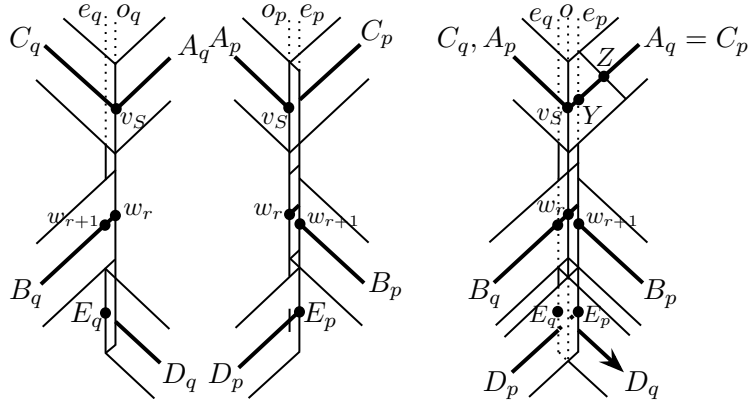

    \begin{overpic}{Pretzel-odd-qp}
    \put(15,50){$e_q$}
    \put(20,50){$o_q$}
    \put(6, 46){$C_q$}
    \put(28,45){$A_q$}
    \put(6,13){$B_q$}
    \put(26,2){$D_q$}
    \put(20.5, 36){\small{$v_S$}}
    \put(21,24){\small{$w_r$}}
    \put(11,23){\footnotesize{$w_{r+1}$}}
    \put(14,10){$E_q$}
   
    \put(44,50){$e_p$}
    \put(39,50){$o_p$}
   \put(32, 46){$A_p$}
    \put(52,47){$C_p$}
    \put(53,14){$B_p$}
    \put(32,2){$D_p$}
    \put(39, 36){\small{$v_S$}}
    \put(38,24){\small{$w_r$}}
    \put(45,23){\footnotesize{$w_{r+1}$}}
    \put(44.5,10){$E_p$}
    \put(74,49.5){$e_q$}
    \put(78,50){$o$}
    \put(81,49.5){$e_p$}
    \put(89, 46){$A_q=C_p$}
    \put(62,46){$C_q,A_p$}
    \put(89,13){$B_p$}
    \put(66,13){$B_q$}
    \put(68,1){$D_p$}
    \put(86.5,1){$D_q$}
    \put(75, 36.5){\small{$v_S$}}
    \put(81,36){\small{$Y$}}
    \put(82.75, 43.5){\small{$Z$}}
    \put(75,24.5){\small{$w_r$}}
    \put(81.5,23){\footnotesize{$w_{r+1}$}}
    \put(74,10){\footnotesize{$E_q$}}
    \put(81,10){\footnotesize{$E_p$}}
    \end{overpic}
    \caption{On the left, the flipped $q$ half-twists have been placed to the left of the $p$ half-twists.  On the right, the $q$ half-twists have been carefully positioned on top of the $p$ half-twists.}
    \label{fig:pretzel-o-qp}
    \end{figure}
\end{center}
\noindent Step 3: Ends $A_q$ to $C_p$ coincide. So we join them together along a fold line with fold angle 0, one end of which touches the even fold line $e_p$.  This is shown in Figure~\ref{fig:pretzel-o-qp} right, with the fold line passing through point $Z$ on the knot diagram. Point $Y$ is the point near $v_S$ where the knot diagram intersects with even fold line $e_p$, thus $d(v_S,Y)=d$.  From Lemma~\ref{lem:foldline0} we have $d(Y,Z)=\frac{1}{2\sqrt{3}}$. Thus this join adds $2d(v_S,Z)=2d + \frac{1}{\sqrt{3}}$ units to the folded ribbonlength. 
\\ 
Step 4: We join end $D_q$ to $B_p$, by starting with end $D_q$ at vertex $E_q$, then fold end $D_q$ with a left underfold with fold angle $\pi/3$ at vertex $T$, as shown in Figure~\ref{fig:pretzel-ejoin1}. We have chosen vertex $T$ so the lower side-edge of the ribbon coincides with the even fold line $e_q$. Point $S$ has been chosen so that line segment $SE_q$ is perpendicular to the even fold line $e_q$. We have fulfilled the hypotheses of Lemma~\ref{lem:triangle}, thus distance $d(E_q,T) = \frac{1}{\sqrt{3}}$,

\begin{center}
    \begin{figure}[htpb]
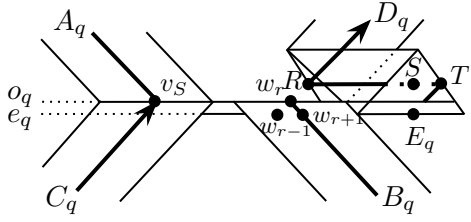

    \begin{overpic}{Pretzel-end-join1}
    \put(24,23){$A_q$}
    \put(23,0){$C_q$}
    \put(67,0){$B_q$}
    \put(66,24){$D_q$}
    \put(70,8){\small{$E_q$}}
    \put(18,11){$e_q$}
    \put(18,14){$o_q$}
    \put(38, 15){\small{$v_S$}}
    \put(50.5,15){\small{$w_r$}}
    \put(51,9.5){\footnotesize{$w_{r-1}$}}
    \put(58,11){\footnotesize{$w_{r+1}$}}
    \put(54,15.25){\small{$R$}}
    \put(70,17){\small{$S$}}
    \put(76,16){\small{$T$}}
   \end{overpic}
    \caption{The flipped $q$ half-twists have been rotated to show how end $D_p$ is folded to join end $B_p$. The $p$ half-twists have been removed for clarity.}
    \label{fig:pretzel-ejoin1}
    \end{figure}
\end{center}
\noindent 
Step 5: We make a second fold in end $D_q$, this time a right underfold with fold angle $\pi/3$ at vertex $R$. This vertex has been chosen so the end $D_q$ coincides with end $B_p$. This means points $w_{r-1}$, $w_r$ and $R$ are collinear.  We join ends $B_p$ to end $D_q$ together after vertex $R$ with a fold of fold angle 0, such that one end of this fold line coincides with the fold line through $R$. This is shown in Figure~\ref{fig:pretzel-ejoin1}, albeit not to scale. The geometry of this ``join" fold line is the same as in Lemma~\ref{lem:foldline0}, thus the distance from $R$ to the fold line is $\frac{1}{2\sqrt{3}}$.
\\ 
Step 6: We show that the ribbonlength needed to join ends $B_p$ and $D_q$ is $\frac{5}{\sqrt{3}}$. Start with end $D_q$, and observe from the geometry of the construction that there is a parallelogram $w_{r-1}E_qTR$. We know from Lemma~\ref{lem:dist} that $d(w_{r+1},E_q)=d(v_S,v_E)=\frac{2}{\sqrt{3}}$, and $d(w_{r-1},w_{r+1})=d$. Thus, $d(T,R)=\frac{2}{\sqrt{3}}+d$. Thus up to vertex $R$, end $D_q$ adds $d(E_q,T) + d(T,R)=\frac{3}{\sqrt{3}}+d$ extra ribbonlength.   We see end $B_p$ adds on $d(w_r,R) = \frac{1}{\sqrt{3}} -d$ to the folded ribbonlength, since $d(w_{r-1}, R)=d(E_q,T)=\frac{1}{\sqrt{3}}$. After vertex $R$, use Step~5 to deduce that both ends combined add $\frac{1}{\sqrt{3}}$ to the folded ribbonlength.
\\
Step 7: Altogether we see, from Steps 3--6, that Join 1 adds $2d + \frac{6}{\sqrt{3}}$ to the folded ribbonlength.

Note: In Step 5, if we fold end $D_q$ at vertex $R$ with a left underfold towards vertex $w_r$, the fold angle is $2\pi/3$. One end of the fold line lies on the line $e_q$. However the end $B_p$ starts at vertex $w_r$ on the line $o=o_q$. Thus we can not join ends $D_q$ and $B_p$ without some additional folding.  The choices we have made in Steps 4 and 5 eliminate this concern.

{\bf Join 2:} Joining ends $C_q$ and $A_r$, and ends $B_q$ and $D_r$ of the $q$ and $r$ half-twists.
\\ 
Step 1: We now join the flipped $q$ half-twists to the $r$ half-twists. These are shown on the left of Figure~\ref{fig:pretzel-o-qr} (over the page).  We next place the $r$ half-twists on top of the flipped $q$ half-twists so that the odd fold lines coincide ($o_q=o_r=o$), and so that the start vertices $v_S$ coincide. This means the vertices $w_r$ of the $q$ and $r$ half-twists also coincide. This is shown on the right in Figure~\ref{fig:pretzel-o-qr}.  This time, the ends we need to join together ($C_q$ to $A_r$ and $B_q$ to $D_r$) all  lie on the left side of the overlapping twists.
\\
Step 2: Ends $C_q$ to $A_r$ coincide, so we join them with a fold line of fold angle $0$, one end of which starts at the even fold line $e_q$.  This is shown in Figure~\ref{fig:pretzel-o-qr} right, with the fold line passing through point $Z$ on the knot diagram. Following similar reasoning to Step 3 in Join 1,  joining $C_q$ to $A_r$ adds $2d(v_S,Z)=2d + \frac{1}{\sqrt{3}}$ units to the folded ribbonlength.

\begin{center}
    \begin{figure}[htpb]
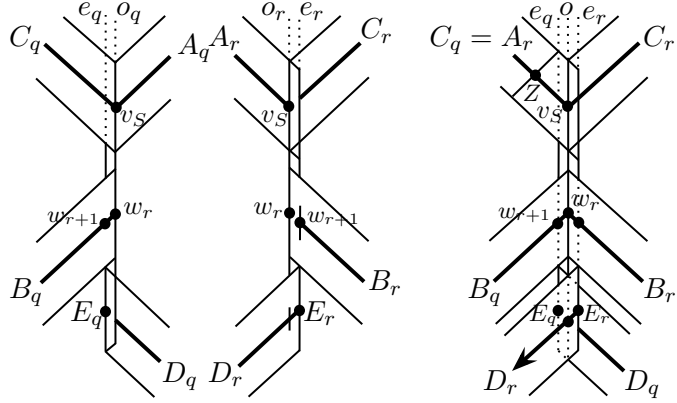

    \begin{overpic}{Pretzel-odd-qr}
    \put(15,50){$e_q$}
    \put(20,50){$o_q$}
    \put(6, 46){$C_q$}
    \put(28,45){$A_q$}
    \put(6,13){$B_q$}
    \put(26,2){$D_q$}
    \put(20.5, 36){\small{$v_S$}}
    \put(21,24){\small{$w_r$}}
    \put(11,23){\footnotesize{$w_{r+1}$}}
    \put(14,10){$E_q$} 
    \put(44,50){$e_r$}
    \put(39,50){$o_r$}
   \put(32, 46){$A_r$}
    \put(52,47){$C_r$}
    \put(53,14){$B_r$}
    \put(32,2){$D_r$}
    \put(39, 36){\small{$v_S$}}
    \put(38,24){\small{$w_r$}}
    \put(45,23){\footnotesize{$w_{r+1}$}}
    \put(44.5,10){$E_r$}
    \put(74,49.5){$e_q$}
    \put(78,50){$o$}
    \put(81,49.5){$e_r$}
    \put(89, 46){$C_r$}
    \put(61,46){$C_q=A_r$}
    \put(89,13){$B_r$}
    \put(66,13){$B_q$}
    \put(68,1){$D_r$}
    \put(86.5,1){$D_q$}
    \put(75, 36.5){\small{$v_S$}}
    \put(73.25, 38.75){\footnotesize{$Z$}}
    \put(79.5,25){\small{$w_r$}}
    \put(70,23.5){\footnotesize{$w_{r+1}$}}
    \put(74,10){\footnotesize{$E_q$}}
    \put(81,10){\footnotesize{$E_r$}}
    \end{overpic}
    \caption{On the left, the flipped $q$ half-twists are to the left of the $r$ half-twists.  On the right, the $r$ half-twists have been carefully placed on top of the $q$ half-twists.}
    \label{fig:pretzel-o-qr}
    \end{figure}
\end{center}
\noindent 
Step 3: We join end $D_r$ to $B_q$ using a similar fold pattern to those described in Steps 4 and 5 of the Join 1 case. However, this time end $D_r$ has a right underfold with fold angle $\pi/3$ followed by a left underfold with fold angle $\pi/3$. The same computations show that the amount of ribbonlength added here is $\frac{5}{\sqrt{3}}$. 
\\
Step 4: Altogether, we see Join 2 adds $2d + \frac{6}{\sqrt{3}}$ to the folded ribbonlength.

{\bf Join 3:} Joining ends $A_p$ and $C_r$, and ends $D_p$ and $B_r$ of the $p$ and $r$ half-twists.
\\ 
Step 1: At this point in the construction, the $r$ half-twists lie over the $p$ half-twists (with the $q$ half-twists between them). Here, the start vertices $v_S$ and the vertices $w_r$ coincide, and the respective odd and even fold lines coincide, so that $o_p=o_r=o$ and $e_p=e_r=e$. This is shown on the left in  Figure~\ref{fig:pretzel-o-pr}.
\\
Step 2: We join at ends $A_p$ and $C_r$. First, fold end $A_p$ at vertex $v_S$ along a fold line which is parallel to the odd fold line $o$, so that end $A_p$ lies under everything. At this stage, end $A_p$ coincides with end $C_r$. Second, join ends $A_p$ and $C_r$ with  a fold line of fold angle $0$, one end of which intersects the even fold line $e$. As before this new fold line passes through point $Z$ on the knot diagram.  Using the same reasoning as in Joins 1 and 2, joining $A_p$ and $C_r$ adds $2d+\frac{1}{\sqrt{3}}$ to the folded ribbonlength.

\begin{center}
    \begin{figure}[htpb]
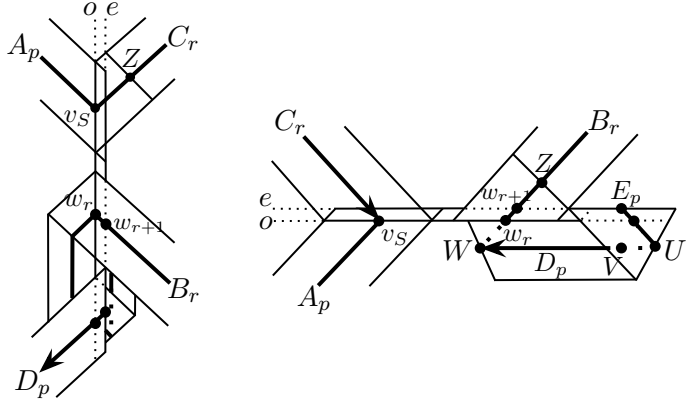

    \begin{overpic}{Pretzel-odd-pr}
    \put(16,50){$e$}
    \put(13,50){$o$}
    \put(3, 45){$A_p$}
    \put(24,46){$C_r$}
    \put(4,1){$D_p$}
    \put(24,13){$B_r$}
   \put(18,43.25){\small{$Z$}}
    \put(10.5, 36){\small{$v_S$}}
    \put(10.5,25){\small{$w_r$}}
    \put(17,22){\footnotesize{$w_{r+1}$}}
    \put(36,25){$e$}
    \put(36,22){$o$}
    \put(41, 12){$A_p$}
    \put(38,35){$C_r$}
    \put(72,16.5){\small{$D_p$}}
    \put(79,35){$B_r$}
    \put(52, 20.5){\small{$v_S$}}
    \put(68,20.5){\small{$w_r$}}
    \put(65,26){\footnotesize{$w_{r+1}$}}
    \put(72, 29.5){\small{$Z$}}
    \put(82,26){\small{$E_p$}}
    \put(89,18){$U$}
    \put(81,16){\small{$V$}}
    \put(60,18){$W$}
   \end{overpic}
    \caption{On the left, the $r$ half-twists lie directly over the $p$ half-twists (with the $q$ half-twists between them). On the right, the $r$ half-twists have been rotated to show how end $D_p$ is folded to join end $B_r$.}
    \label{fig:pretzel-o-pr}
    \end{figure}
\end{center}
\noindent 
Step 3: We next join end $D_p$ to $B_r$. Start with end $D_p$ at vertex $E_p$, then fold a right underfold with fold angle $\pi/3$ in end $D_p$ at vertex $U$. Here,  $U$ is chosen so that the upper side-edge of ribbon $D_p$ coincides with the even fold lines $e$.  We add another right underfold with fold angle $\pi/3$ at vertex $W$. Here $W$ has been chosen so that $W$, $w_r$ and $w_{r+1}$ are collinear. After vertex $W$, end $D_p$ coincides with end $B_r$. We join end $D_p$ with end $B_r$ with a fold line with fold angle 0,  one end of which touches the even fold line $e$. This new fold line intersects the knot diagram at point $Z$. The geometry of this ``join'' fold line is the same as in Lemma~\ref{lem:foldline0}, thus $d(w_{r+1},Z)=\frac{1}{2\sqrt{3}}$.
\\
Step 4: We show that the ribbonlength needed to join ends $D_p$ and $B_r$ is $\frac{6}{\sqrt{3}} +d$.  To find the folded ribbonlength added by end $D_p$ we traverse three edges along the trapezoid $E_pUWw_{r+1}$.   We know from Construction~\ref{const:twists} and Lemma~\ref{lem:dist} that $d(w_{r+1},E_p)=d(v_S,v_E)=\frac{2}{\sqrt{3}}$. Suppose that we construct point $V$ on line segment $WU$ so that $E_pV$ is perpendicular to it. Then in right  triangle $\triangle E_pVU$, we know from Lemma~\ref{lem:triangle} that $d(E_p,U)=\frac{1}{\sqrt{3}}$ and $d(U,V)=\frac{1}{2\sqrt{3}}$. Using symmetry we see end $D_p$ traverses the knot from $E_p$ to $w_{r+1}$ in distance $d_K(E_p,w_{r+1})= \frac{2}{\sqrt{3}} + \frac{2}{2\sqrt{3}} + \frac{2}{\sqrt{3}} =\frac{5}{\sqrt{3}}$. We also know $d(w_{r+1},Z)=\frac{1}{2\sqrt{3}}$. Therefore, end $D_p$ adds $\frac{5}{\sqrt{3}} + \frac{1}{2\sqrt{3}}$, while end $B_r$ adds $d(w_r,w_{r+1})+d(w_{r+1},Z)=d+\frac{1}{2\sqrt{3}}$ to the folded ribbonlength.
\\
Step 5: Altogether we see, from Steps 2--4, that Join 3 adds $3d+\frac{7}{\sqrt{3}}$ to the folded ribbonlength. 

We end this construction by observing that the case where $p$, $q$ and $r$ are all even is almost identical, however ends $A$ and $B$ are one side of the half-twists and ends $C$ and $D$ are on the other.  
\qed
\end{const}

\begin{const}\label{const:pretzel-parity} [$P(p,q,r)$ pretzel links --- one of opposite parity] We construct a $P(p,q,r)$ pretzel link, where one of $p$, $q$, $r$ is of opposite parity to the other two. From Remark~\ref{rmk:pretzel}, we can assume without loss of generality, that $p$ and $r$ are odd and $q$ is even. Use Construction~\ref{const:twists} to construct strands of $p$, $q$ and $r$ half-twists.  Let us assume that $|p|\leq |r|$ and $|q|\leq |r|$. Following Remark~\ref{rmk:length}, add additional accordion folds to the ends $B_p$, $D_p$ of the $p$ half-twists so the $p$ and $r$ half-twists have the same ribbonlength. Also add extra accordion folds as needed to the ends $B_q$, $D_q$ of the $q$ half-twists so that the $q$ half-twists are $2d$ units of ribbonlength shorter than the $r$ half-twists. This means that if the vertex of the final fold of the $r$-half-twists is $w_r$, then the vertex of the final fold of the $q$ half-twists is $w_{r-1}$. This is illustrated in Figure~\ref{fig:pretzel-pqr}. We make these assumptions to simplify our ribbonlength calculations without adding too much additional ribbonlength. Moreover, we can make these assumptions without loss of generality. For example, if $|p|\leq|r| \leq |q|$, then we can add additional accordion folds to the $p$ and $r$ half-twists until the geometry matches the assumptions above.

\begin{center}
    \begin{figure}[htpb]
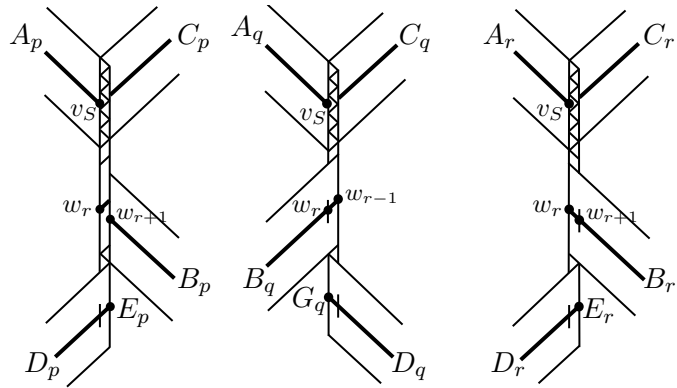

    \begin{overpic}{Pretzel-pqr}
    \put(6, 45){$A_p$}
    \put(28,45){$C_p$}
    \put(28,13){$B_p$}
    \put(8,2){$D_p$}
    \put(14, 35.5){\small{$v_S$}}
    \put(13,23){\small{$w_r$}}
    \put(20,22){\footnotesize{$w_{r+1}$}}
    \put(20,9){$E_p$}
   \put(35, 46){$A_q$}
    \put(57,45){$C_q$}
    \put(36.5,13){$B_q$}
    \put(56,2){$D_q$}
    \put(44, 35){\small{$v_S$}}
    \put(43,23){\small{$w_r$}}
    \put(50,24.5){\footnotesize{$w_{r-1}$}}
    \put(43,10.5){$G_q$}
    \put(68, 45){$A_r$}
    \put(89,45){$C_r$}
    \put(89,13){$B_r$}
    \put(69,2){$D_r$}
    \put(75, 35.5){\small{$v_S$}}
    \put(74.5,23){\small{$w_r$}}
    \put(81.5,22){\footnotesize{$w_{r+1}$}}
    \put(81,9){$E_r$}
    \end{overpic}
    \caption{The $p$, $q$ and $r$ half-twists where $q$ is even. These are arranged ready for the ends to be connected to form a pretzel knot.}
    \label{fig:pretzel-pqr}
    \end{figure}
\end{center}

In order to complete the pretzel knot, we join the ends of the strands of half-twists in a cyclic order (as shown in Figure~\ref{fig:pretzel-fold}). Namely, we need to join:
\begin{compactenum}
\item ends $C_p$ and $A_q$, and ends $B_p$ and $B_q$ of the $p$ and $q$ half-twists, 
\item ends $C_q$ and $A_r$, and ends $D_q$ and $D_r$ of the $q$ and $r$ half-twists, 
\item ends $A_p$ and $C_r$, and ends $D_p$ and $B_r$ of the $p$ and $r$ half-twists.
\end{compactenum}

{\bf Join 1:} Joining  ends $C_p$ and $A_q$, and ends $B_p$ and $B_q$ of the $p$ and $q$ half-twists.
\\
Step 1:  First, rotate the strand of $q$ half-twists by $\pi$ radians along the axis of the accordion folds (so they now lie upside down).  Second, move them so they lie to the left of the $p$ half-twists. Finally, place the $q$ half-twists on top of the $p$ half-twists so that the odd lines $o_p=o_q=o$ coincide. Similarly, the start vertices $v_S$ coincide and the vertices $w_r$ coincides. This is shown on the right in Figure~\ref{fig:pretzel-s-qp}.  The ends we need to join together ($C_p$ to  $A_q$,  and $B_p$ to $B_q$) all lie on the right side of the overlapping half-twists.

\begin{center}
    \begin{figure}[htpb]
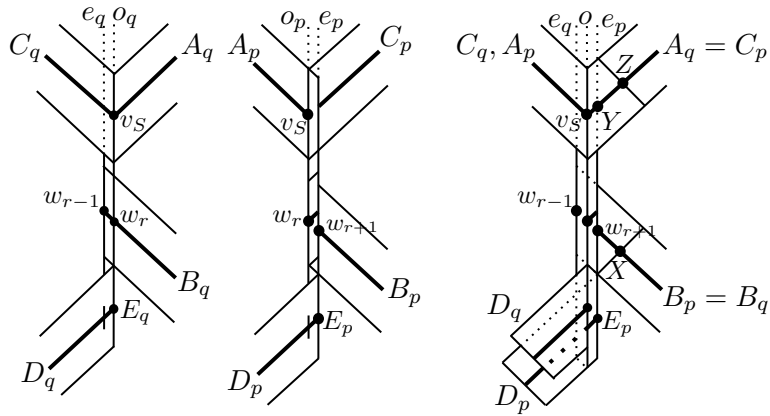

    \begin{overpic}{Pretzel-stack-qp}
    \put(13,50.5){$e_q$}
    \put(17,50.5){$o_q$}
    \put(3.5, 46){$C_q$}
    \put(26,46){$A_q$}
    \put(25.5,15){$B_q$}
    \put(5,3){$D_q$}
    \put(18, 36.5){\small{$v_S$}}
    \put(18,24){\small{$w_r$}}
    \put(8,26.5){\small{$w_{r-1}$}}
    \put(18,11.5){\small{$E_q$}}  
    \put(44,50){$e_p$}
    \put(39,50){$o_p$}
   \put(32, 46){$A_p$}
    \put(52,47){$C_p$}
    \put(53,14){$B_p$}
    \put(32,2){$D_p$}
    \put(39, 36){\small{$v_S$}}
    \put(38,24){\small{$w_r$}}
    \put(45,23){\footnotesize{$w_{r+1}$}}
    \put(44.5,10){\small{$E_p$}}
    \put(74,49.5){$e_q$}
    \put(78,50){$o$}
    \put(81,49.5){$e_p$}
    \put(89, 46){$A_q=C_p$}
    \put(62,46){$C_q,A_p$}
    \put(89,13){$B_p=B_q$}
    \put(67,0){$D_p$}
    \put(66,12){$D_q$}
    \put(75, 36.5){\small{$v_S$}}
    \put(81,36){\small{$Y$}}
    \put(82.75, 43.5){\small{$Z$}}
    \put(70,26.5){\small{$w_{r-1}$}}
    \put(81.5,22.5){\footnotesize{$w_{r+1}$}}
    \put(81.5, 16.5){\small{$X$}}
    \put(81,10){\small{$E_p$}}
    \end{overpic}
    \caption{On the right, the flipped $q$ half-twists have been carefully positioned on top of the $p$ half-twists.}
    \label{fig:pretzel-s-qp}
    \end{figure}
\end{center}
\noindent
Step 2: Since ends $A_q$ to $C_p$ coincide, we join them along a fold line with fold angle 0, one end of which touches the even fold line $e_p$.  This is shown in Figure~\ref{fig:pretzel-s-qp} right, with the fold line passing through point $Z$ on the knot diagram. Point $Y$ is the point near $v_S$ where the knot diagram intersects with even fold line $e_p$, thus $d(v_S,Y)=d$.  From Lemma~\ref{lem:foldline0} we have $d(Y,Z)=\frac{1}{2\sqrt{3}}$. Thus joining $A_q$ to $C_p$ adds $2d(v_S,Z)=2d + \frac{1}{\sqrt{3}}$ units to the folded ribbonlength. 
\\
Step 3: 
Note that ends  $B_p$ and $B_q$ coincide since they are parallel and end $B_q$ passes through vertex $w_r$ and end $B_p$ starts at vertex $w_r$. We join these ends along a fold line with fold angle 0, one end of which touches the even fold line $e_p$.  This is shown in Figure~\ref{fig:pretzel-s-qp} right, with the fold line passing through point $X$ on the knot diagram near $w_{r+1}$. From Lemma~\ref{lem:foldline0}, we know $d(w_{r+1}, X)=\frac{1}{2\sqrt{3}}$. Thus, end $B_q$ adds $d(w_{r-1},w_{r+1})+ d(w_{r+1}, X) = 2d+\frac{1}{2\sqrt{3}}$, and end $B_p$ adds $d(w_r,w_{r+1})+ d(w_{r+1}, X) = d+\frac{1}{2\sqrt{3}}$ to the folded ribbonlength. 
\\
Step 4: Altogether we see, from Steps 2 and 3, that Join 1 adds a total of $5d+\frac{2}{\sqrt{3}}$ units of folded ribbonlength.

{\bf Join 2:} Joining ends $C_q$ and $A_r$, and ends $D_q$ and $D_r$ of the $q$ and $r$ half-twists.
\\
Step 1: We now join the flipped $q$ half-twists to the $r$ half-twists. To do this, we place the $r$ half-twists on top of the flipped $q$ half-twists so that the odd fold lines coincide ($o_q=o_r=o$), and the start vertices $v_S$ coincide. This means the vertices $w_r$ of the $q$ and $r$ half-twists also coincide. This is shown on the right in Figure~\ref{fig:pretzel-s-qr}. This time, the ends we need to join together ($C_q$ to $A_r$ and $D_q$ to $D_r$) all lie on the left side of the overlapping twists.

\begin{center}
    \begin{figure}[htpb]
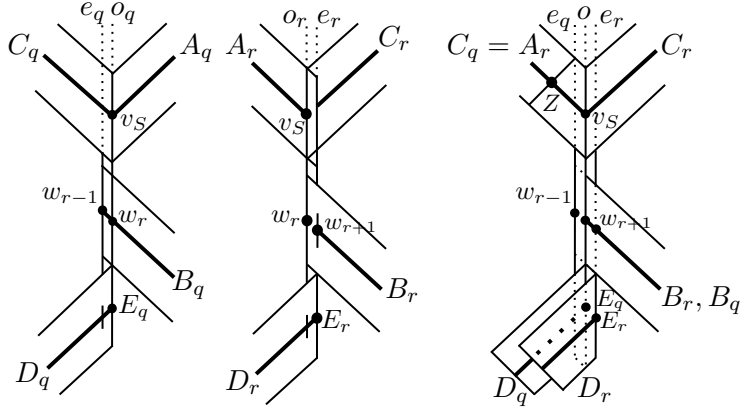

    \begin{overpic}{Pretzel-stack-qr}
    \put(13,51){$e_q$}
    \put(17,51){$o_q$}
    \put(3.5, 46){$C_q$}
    \put(26,46){$A_q$}
    \put(25,15){$B_q$}
    \put(4.5,3){$D_q$}
    \put(18.25, 36.5){\small{$v_S$}}
    \put(18,24){\small{$w_r$}}
    \put(8,27){\small{$w_{r-1}$}}
    \put(18,12){\small{$E_q$}} 
    \put(44,50){$e_r$}
    \put(39.5,50){$o_r$}
   \put(32, 46){$A_r$}
    \put(52,47){$C_r$}
    \put(53,14){$B_r$}
    \put(32,2){$D_r$}
    \put(39, 36){\small{$v_S$}}
    \put(38,24){\small{$w_r$}}
    \put(45,23){\footnotesize{$w_{r+1}$}}
    \put(44.5,10){\small{$E_r$}}
    \put(74,50){$e_q$}
    \put(78,50.5){$o$}
    \put(81,50){$e_r$}
    \put(89, 46){$C_r$}
    \put(61,46){$C_q=A_r$}
    \put(89,13){$B_r, B_q$}
    \put(67,1){$D_q$}
    \put(78,1){$D_r$}
    \put(80,36.75){\small{$v_S$}}
    \put(73.5, 38.5){\footnotesize{$Z$}}
    \put(70,27){\small{$w_{r-1}$}}
    \put(81,23.5){\footnotesize{$w_{r+1}$}}
    \put(80.5,13){\footnotesize{$E_q$}}
    \put(81,10){\footnotesize{$E_r$}}
    \end{overpic}
    \caption{On the right, the $r$ half-twists are positioned over the flipped $q$ half-twists.}
    \label{fig:pretzel-s-qr}
    \end{figure}
\end{center}
\noindent
Step 2:  Ends $C_q$ and $A_r$ coincide, so we join them along a fold line of fold angle 0, one end of touches the even
fold line $e_q$. This is shown on the right in Figure~\ref{fig:pretzel-s-qr}, with the fold line passing through point Z on the knot
diagram. Following similar reasoning to Join 1, joining $C_q$ to $A_r$ adds $2d(v_S,Z) = 2d+\frac{1}{\sqrt{3}}$. 
units to the folded ribbonlength.

\begin{center}
    \begin{figure}[htpb]
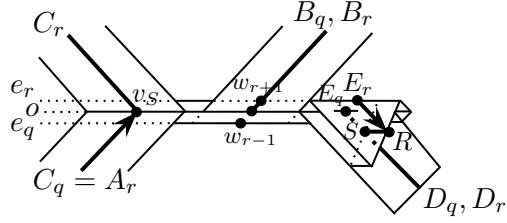

    \begin{overpic}{Pretzel-end-join3}
    \put(23,24){$C_r$}
    \put(23,3.5){$C_q=A_r$}
    \put(57,25.5){$B_q,B_r$}
    \put(74,1){$D_q, D_r$}
    \put(63.5,16.25){\small{$E_r$}}
     \put(60,15){\footnotesize{$E_q$}}
    \put(20,11){$e_q$}
    \put(22,13){$o$}
    \put(20,16){$e_r$}
    \put(36, 15){\small{$v_S$}}
    \put(48,9.5){\footnotesize{$w_{r-1}$}}
    \put(49,16.5){\footnotesize{$w_{r+1}$}}
    \put(70,8.5){\small{$R$}}
    \put(63.5,10){\footnotesize{$S$}}
   \end{overpic}
    \caption{The overlapping $q$ and $r$ half-twists have been rotated to show how end $D_r$ is folded to join end $D_q$.}
    \label{fig:pretzel-ejoin3}
    \end{figure}
\end{center}
\noindent
Step 3: To join end $D_r$ to $D_q$, we need a similar folding pattern to the one in Lemma~\ref{lem:triangle}. The details are shown in Figure~\ref{fig:pretzel-ejoin3}. Start by taking end $D_r$ from vertex $E_r$, then make a right underfold at vertex $R$ with fold angle $\pi/3$. Here, vertex $R$ is chosen so the top side-edge of the end $D_r$ coincides with the even fold line $e_r$. This means, by Lemma~\ref{lem:triangle},  that $d(E_r,R) = \frac{1}{\sqrt{3}}$. We then travel distance $d$ to vertex $S$ and make a left underfold with fold angle $\pi/3$ in end $D_q$ there. By construction, ends $D_r$ and $D_q$ coincide. We these together with a fold of fold angle 0, one end of which touches the fold line through  vertex $S$. To simplify Figure~\ref{fig:pretzel-s-qr} , we have not shown the final ``join'' fold line. The geometry of this final fold line matches Lemma~\ref{lem:foldline0}.  Hence, we find end $D_r$ adds $\frac{1}{\sqrt{3}}+d+\frac{1}{2\sqrt{3}}$ to the ribbonlength, and end $D_q$ adds $\frac{1}{\sqrt{3}}-d +\frac{1}{2\sqrt{3}}$ to the ribbonlength.
\\
Step 4:  Altogether we see, from Steps 2 and 3, that Join 2 adds a total of $\frac{4}{\sqrt{3}}+2d$ units of folded ribbonlength.

{\bf Join 3:} Joining ends $A_p$ and $C_r$, and ends $D_p$ and $B_r$ of the $p$ and $r$ half-twists. The geometry is identical to Join 3 from Construction~\ref{const:pretzel-odd}. Thus Join 3 adds $3d+\frac{7}{\sqrt{3}}$ to the folded ribbonlength. 
\qed
\end{const}

\begin{theorem}\label{thm:pretzel1} Any $P(p,q,r)$ pretzel link type $L$, contains a folded ribbon link $L_w$, such that for any $\epsilon>0$, the folded ribbonlength is 
$$\Rib(L_w)= \begin{cases} \frac{55}{\sqrt{3}} +\epsilon & \text{ when $p$, $q$, $r$ have the same parity},
\\ \frac{51}{\sqrt{3}} +\epsilon\leq 29.45 & \text{ when one of $p$, $q$, $r$ has the opposite parity to the others}.
\end{cases}
$$
\end{theorem}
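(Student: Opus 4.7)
The plan is to prove Theorem~\ref{thm:pretzel1} by directly summing, for each of the two parity cases, the folded ribbonlength contributions coming from the three strands of half-twists together with the three joining moves performed in Constructions~\ref{const:pretzel-odd} and~\ref{const:pretzel-parity}. The ingredients are all already in place: the strand lengths come from Proposition~\ref{prop:twists}, while the joins have been analyzed step by step inside the constructions themselves. What remains is to add up the pieces and then push the $d$-dependent error to an arbitrary $\epsilon > 0$.

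More specifically, by Proposition~\ref{prop:twists}, each of the three strands of half-twists (after the common-length padding from Remark~\ref{rmk:length}) has folded ribbonlength $\frac{12}{\sqrt{3}} + O(d)$, where the $O(d)$ coefficient is a fixed integer multiple of $d$ determined by $p$, $q$, $r$. Summed over three strands this gives $\frac{36}{\sqrt{3}} + O(d)$. In the same-parity case, Construction~\ref{const:pretzel-odd} records contributions $2d + \frac{6}{\sqrt{3}}$ for Join~1, $2d + \frac{6}{\sqrt{3}}$ for Join~2, and $3d + \frac{7}{\sqrt{3}}$ for Join~3, producing a grand total of $\frac{55}{\sqrt{3}} + O(d)$. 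In the opposite-parity case, Construction~\ref{const:pretzel-parity} records $5d + \frac{2}{\sqrt{3}}$ for Join~1, $2d + \frac{4}{\sqrt{3}}$ for Join~2, and (reusing Join~3 from Construction~\ref{const:pretzel-odd}) $3d + \frac{7}{\sqrt{3}}$ for Join~3, so the three joins add $\frac{13}{\sqrt{3}} + 10d$, and the grand total is $\frac{49}{\sqrt{3}} + O(d)$, which is bounded above by the stated $\frac{51}{\sqrt{3}}$.

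To convert the $O(d)$ terms into the $+\epsilon$ of the theorem statement, I follow exactly the device used in the proof of Theorem~\ref{thm:twists}: since the $d$-terms form a finite sum once $p$, $q$, $r$ are fixed, for any prescribed $\epsilon > 0$ I may choose $d$ small enough that the total error is at most $\epsilon$. This then yields a folded ribbon representative $L_w$ of the link type with $\Rib(L_w) \leq \frac{55}{\sqrt{3}} + \epsilon$ or $\frac{51}{\sqrt{3}} + \epsilon$ as appropriate.

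The main obstacle is bookkeeping rather than mathematical content. I need to confirm that each strand, after the padding of Remark~\ref{rmk:length}, really does have its final fold vertex positioned as Construction~\ref{const:pretzel-odd} or~\ref{const:pretzel-parity} assumes (for instance, that the final vertex of the padded $q$ strand in the opposite-parity case is $w_{r-1}$ rather than $w_r$, and that the odd and even fold lines of the three overlaid strands genuinely coincide as claimed). Once this geometric compatibility is checked and the crossing information is seen to agree with the $P(p,q,r)$ diagram, so that Definition~\ref{def:FR} is satisfied, the rest of the argument reduces to adding the six contributions tabulated above.
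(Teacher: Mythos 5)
Your proposal is correct and follows essentially the same route as the paper: sum the three strand lengths from Proposition~\ref{prop:twists} with the join costs tabulated in Constructions~\ref{const:pretzel-odd} and~\ref{const:pretzel-parity}, obtaining $\frac{55}{\sqrt{3}}+O(d)$ and $\frac{49}{\sqrt{3}}+O(d)$ respectively, then shrink $d$ to absorb the error into $\epsilon$. You even correctly arrive at $\frac{49}{\sqrt{3}}$ in the opposite-parity case, which is what the paper's own computation (and its Theorem~\ref{thm:pretzel}) gives, despite the $\frac{51}{\sqrt{3}}$ appearing in the statement of Theorem~\ref{thm:pretzel1}.
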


\begin{proof}  We find the folded ribbonlength of $P(p,q,r)=L$ pretzel link from Construction~\ref{const:pretzel-odd}. Here we assumed $p$, $q$ and $r$ have the same parity and $|p|\leq |q|\leq |r|$. By Proposition~\ref{prop:twists}, the folded ribbonlength of the three strands of half-twists is $3(\frac{12}{\sqrt{3}}+ 2|r|d-d)$.  From Construction~\ref{const:pretzel-odd} we know that
\begin{compactenum}
\item joining $A_q$ to $C_p$ and $D_q$ to $B_p$ adds $\frac{6}{\sqrt{3}}+2d$,
\item joining $C_q$ to $A_r$ and $D_r$ to $B_q$ also adds $\frac{6}{\sqrt{3}}+2d$,
\item joining $A_p$ to $C_r$ and $D_p$ to $B_r$ adds $\frac{7}{\sqrt{3}}+3d$. 
\end{compactenum}
Altogether we find that the folded ribbonlength of the folded ribbon pretzel link $L_w$ is 
\begin{align*}\Rib(L_w) & = \frac{36}{\sqrt{3}} +6|r|d - 3d + \frac{6}{\sqrt{3}}+ 2d+ \frac{6}{\sqrt{3}}+ 2d +\frac{7}{\sqrt{3}} +3d
\\ & = \frac{55}{\sqrt{3}} + d(6|r| +4)  \leq 31.755    	
\end{align*}  
We find the folded ribbonlength of $P(p,q,r)=L$ pretzel link from Construction~\ref{const:pretzel-parity}. Here, we assumed one of $p$, $q$, and $r$ has the opposite parity to the other ones. Without loss of generality, we made other assumptions, for example that $p$, $r$ are odd and $q$ is even and $|p|\leq |r|$ and $|q|\leq |r|$. By Proposition~\ref{prop:twists}, the folded ribbonlength of the three strands of half-twists is $2(\frac{12}{\sqrt{3}}+ 2|r|d -d) + \frac{12}{\sqrt{3}}+ 2|r|d -3d$. 
From Construction~\ref{const:pretzel-parity} we know that
\begin{compactenum}
\item joining $A_q$ to $C_p$ and $B_q$ to $B_p$ adds $5d+\frac{2}{\sqrt{3}}$,
\item joining $C_q$ to $A_r$ and $D_r$ to $D_q$ also adds $2d+\frac{4}{\sqrt{3}}$,
\item joining $A_p$ to $C_r$ and $D_p$ to $B_r$ adds $3d + \frac{7}{\sqrt{3}}$. 
\end{compactenum}
Altogether we find that the folded ribbonlength of the folded ribbon pretzel link $L_w$ is 
\begin{align*}\Rib(L_w) & = \frac{36}{\sqrt{3}} +6|r|d - 5d +5d+\frac{2}{\sqrt{3}} +2d+\frac{4}{\sqrt{3}} +3d+ \frac{7}{\sqrt{3}}
\\ & = \frac{49}{\sqrt{3}} + d(6|r| +5)  \leq  28.291  	
\end{align*}
We achieve statement of the theorem by simply picking $d$ small enough after choosing $\epsilon$.
\end{proof}

Since Theorem~\ref{thm:pretzel1} holds for all $\epsilon>0$, Theorem~\ref{thm:pretzel} immediately follows. That is, the infimal ribbonlength of any $P(p,q,r)$ pretzel link is
$$\Rib([P(p,q,r)])\leq \begin{cases} \frac{55}{\sqrt{3}}  \leq 31.755   & \text{ when $p$, $q$, $r$ have the same parity},
\\  \frac{49}{\sqrt{3}}  \leq  28.291 & \text{ when one of $p$, $q$, $r$ has the opposite parity to the others}.
\end{cases}
$$

What about $n$-strand pretzel links $P(p_1,p_2, \dots, p_n)$? There are more cases to be thought through here, depending on the parity of the $p_i$ and whether $n$ is odd or even. However, it is relatively straightforward to find some kind upper bound on folded ribbonlength, even though it is not the best possible. 

\begin{theorem}\label{thm:pretzel-m} Any $n$ strand pretzel link type $P(p_1,p_2,\dots p_n)=L$, contains a folded ribbon link $L_w$, such that for any $\epsilon>0$, the folded ribbonlength is 
$$ \Rib(L_w) \leq \frac{18n+1}{\sqrt{3}} + \epsilon.$$
\end{theorem}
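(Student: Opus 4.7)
The plan is to mimic the three–strand argument by iterating the concertina–stacking idea: fold each of the $n$ strands of $p_i$ half-twists using Construction~\ref{const:twists} with fold angle $\pi/3$ and distance $d$, then stack them on top of one another so the odd fold lines, the start vertices $v_S$, and (after padding with the trick of Remark~\ref{rmk:length}) the final fold vertices all coincide. By Proposition~\ref{prop:twists}, the $n$ strands together contribute folded ribbonlength at most $\tfrac{12n}{\sqrt{3}} + O(d)$, where the $O(d)$ absorbs both the $(2|p_i|-1)d$ terms from each strand and the additional accordion folds used to equalize their lengths. After stacking, the cyclic connectivity of a pretzel link forces us to perform exactly $n$ joins: one between each consecutive pair of stacked strands.

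First I would handle the $n-1$ "easy" joins, each analogous to Join~1 or Join~2 in Construction~\ref{const:pretzel-odd}: flip strand $i+1$ by $\pi$ along its accordion axis, lay it on top of the stack so its odd fold line and vertices $v_S$, $w_r$ coincide with the stack's, and connect the two exposed ends using a fold line of fold angle $0$ on one side and a two–fold zig–zag pattern of the type established in Lemma~\ref{lem:triangle} on the other. The calculation that each such join costs $\tfrac{6}{\sqrt{3}} + O(d)$ is exactly the one already performed in Steps 3–6 of Join~1; the parity of $p_i$ only swaps which pair of ends sits on which side of the stack, and in either case the same local geometry applies, so the join cost is at most $\tfrac{6}{\sqrt{3}} + O(d)$. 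This accounts for $(n-1)\cdot \tfrac{6}{\sqrt{3}} + O(d)$ of ribbonlength.

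The last join closes the cycle between strand $n$ and strand $1$, whose remaining free ends now lie on the same side of the stack (because strand $1$ was never flipped). This is precisely the situation of Join~3 in Construction~\ref{const:pretzel-odd}: we fold end $A_{p_1}$ at $v_S$ with a parallel fold so it passes underneath the stack and coincides with $C_{p_n}$, then connect with a fold line of fold angle $0$; and we route end $D_{p_1}$ through the trapezoidal path $E_{p_1} U W w_{r+1}$ to meet $B_{p_n}$. The same computation as in Step~4 of Join~3 gives a cost of $\tfrac{7}{\sqrt{3}} + O(d)$. Summing the three contributions yields
\begin{equation*}
\Rib(L_w) \;\leq\; \frac{12n}{\sqrt{3}} \;+\; (n-1)\cdot \frac{6}{\sqrt{3}} \;+\; \frac{7}{\sqrt{3}} \;+\; O(d) \;=\; \frac{18n+1}{\sqrt{3}} + O(d),
\end{equation*}
and choosing $d$ small enough given $\epsilon$ completes the proof.

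The main obstacle is not the arithmetic but the bookkeeping: one must verify that the parity of each $p_i$ (which controls on which side of each strand the ends $A_i,B_i,C_i,D_i$ emerge) never forces a more expensive join than the two prototype joins already analyzed. I would handle this by showing that for any parity pattern, after applying the flip–and–stack procedure, at most one pair of ends is "trapped" on the same side of the stack (the closing pair), and every other adjacent pair admits a direct Join~1/Join~2 style connection. The mild wastefulness baked into padding every strand to the length of the longest is what allows us to assert a uniform bound independent of the $p_i$, at the cost of a slightly loose constant — exactly as the theorem's statement warns.
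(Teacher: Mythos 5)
Your proposal follows essentially the same route as the paper's own proof: bound the $n$ strands via Proposition~\ref{prop:twists}, perform $n-1$ consecutive joins at cost $\tfrac{6}{\sqrt{3}}+O(d)$ each, close the cycle with one Join~3--style connection at cost $\tfrac{7}{\sqrt{3}}+O(d)$, and then shrink $d$. The arithmetic and the treatment of the parity bookkeeping (which the paper dispatches with a brief remark that the $n$ even case only costs less) match the published argument.
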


\begin{proof} Figure~\ref{fig:pretzel-box} shows how the $n$ strands of half-twists of the pretzel link $L$ are connected in a cyclic manner. We can extend Constructions~\ref{const:pretzel-odd} and~\ref{const:pretzel-parity} to $L$ by continuing to flip over every other strand of half-twists. We also arrange for each successive strand of half-twists so that the odd fold lines $o_{p_i}$ coincide and so that all start vertices $v_S$ coincide. We know by Proposition~\ref{prop:twists} that the folded ribbonlength of the $n$ strands of half-twists combined is $\leq n(\frac{12}{\sqrt{3}}+ 2|k|d -d)$, where $k=\max\{|p_1|, |p_2|, \dots |p_n|\}$. Next, consider two consecutive strands of half-twists, the $p_i$ and $p_{i+1}$ half-twists (for $i=1, 2, \dots n-1$). As shown in Theorem~\ref{thm:pretzel1}, when we join the right ends of the strand of $p_i$ half-twists to the left ends of the strand of $p_{i+1}$ twists, we use at most $\frac{6}{\sqrt{3}}+2d$ units of folded ribbonlength. Assuming $n$ is odd, when we join the left ends of the $p_1$ half-twists to the right ends of the $p_n$ half-twists we use at most $3d + \frac{7}{\sqrt{3}}$ units of folded ribbonlength.  If $n$ is even, a moment's thought shows we use far less folded ribbonlength than in the $n$ is odd case. Altogether we find
\begin{align*} \Rib(L_w) &\leq n(\frac{12}{\sqrt{3}}+ 2|k|d -d) + (n-1)(\frac{6}{\sqrt{3}}+2d) + 3d + \frac{7}{\sqrt{3}}
\\ & = \frac{18n+1}{\sqrt{3}} + d(2n|k|-n+2n-2+3)
\\ & =  \frac{18n+1}{\sqrt{3}} + d(2n|k| + n +1).
\end{align*}
We achieve statement of the theorem by simply picking $d$ small enough after choosing $\epsilon$.
\end{proof}
Since $\epsilon>0$ is arbitrary, Theorem~\ref{thm:pretzel-mi} immediately follows. That is, the infimal folded ribbonlength of any $n$-strand pretzel link type $P(p_1,p_2,\dots p_n)$, is 
$ \Rib([P(p_1,p_2,\dots p_n)]) \leq \frac{18n+1}{\sqrt{3}}.$

How do these results compare with the previous bounds? We know from \cite{CDPP} that 3-strand pretzel links $P(p,q,r)$ have infimal folded ribbonlength $\Rib([P(p,q,r)])\leq (|p|+|q|+|r|) + 6$. This bound beats Theorem~\ref{thm:pretzel} for all $p,q, r$ where 
$$|p|+|q|+|r|\leq \begin{cases} 25 & \text{ when $p$, $q$, $r$ have the same parity},
\\ 22 & \text{ when one of $p$, $q$, $r$ has the opposite parity to the others}.
\end{cases}$$
 We know that the crossing number $\Cr(P(p,q,r))\leq |p|+|q|+|r|$, thus the uniform bound of Theorem~\ref{thm:pretzel} only comes into its own once the crossing number is reasonably large. Similarly for $n$-strand pretzel link $L=P(p_1,p_2,\dots,p_n)$, the upper bound of  $\Rib([L])\leq(\sum_{i=1}^n|p_i|)+2n$ from \cite{CDPP} holds for small crossing knots, that is when $\sum_{i=1}^n|p_i| \leq \frac{18n+1}{\sqrt{3}} -2n$.


\section{Table of knots with their folded ribbonlength}\label{sect:table}
In this section we create several tables which give with the best known upper bounds on folded ribbonlength for each knot type with crossing number $\leq 9$. Before we do this, we repeat the upper bounds that we use from Section~\ref{sect:intro}.

\begin{enumerate}
\item Any $(2,q)$-torus knot has $\Rib([T(2,q)])\leq q+3$ from \cite{CDPP}. \item Any $(p,q)$-torus link $L$ has $\Rib([L])\leq 2p$ where $p\geq q\geq 2$ from \cite{Den-FRF}. 
\item Any twist knot $T_n$, with $n$ half-twists has $\Rib([T_n])\leq n+6$ from \cite{CDPP}.
 \item Any $(p,q,r)$-pretzel link $L$ has $\Rib([L])\leq (|p|+|q|+|r|) + 6$ from \cite{CDPP}.
\item Any $n$-strand pretzel link $L$ constructed from $(p_1,p_2,\dots,p_n)$ half-twists, has $\Rib([L])\leq(\sum_{i=1}^n|p_i|)+2n$ from \cite{CDPP}.
\item Any $2$-bridge knot with crossing number $\Cr(K)$ has $\Rib([K])\leq  2\Cr(K)+2$ from \cite{KNY-2Bridge}.
\item Any knot $K$ has $\Rib([K])\leq 2.5\Cr(K)+1$ from \cite{KNY-Lin}.
\end{enumerate}

Before moving to the tables, here are a few remarks. Firstly, the 2-bridge knot bound in (6) is always smaller than the bound for any knot given in (7). Secondly, for small crossing knots, we mostly use the bounds for particular families of knots. Thirdly, the uniform upper bounds on the folded ribbonlength for the $(2,q)$ torus and twist knots\footnote{ For $(2,q)$ torus knots $\Rib([T(2,q)]) \leq 8\sqrt{3}\leq 13.86$ is only relevant for odd $q \geq 11$, that is crossing number $11$ and up. For twist knots we $\Rib([T_n])\leq  9\sqrt{3}+2 \leq 17.59$ for $n$ odd and $\Rib([T_n])\leq  8\sqrt{3} +2 \leq  15.86$ for $n$ even. These are relevant once $n \geq 10$, equivalently crossing number 12 and higher. } given in Section~\ref{sect:intro} are only relevant when the crossing number is 11 and higher. As discussed in the Section~\ref{sect:ribbon}, the uniform bounds on the folded pretzel links aren't relevant for the small crossing knots we consider here. Finally, when using the 2-bridge knot bound, we use the notation for 2-bridge knots found in KnotInfo \cite{knotinfo}. That is, any 2-bridge knot can be reconstructed from the fraction  $p/q$ (where  $0<q<p$) by finding a continued fraction expansion of $p/q$ (see for instance \cite{Crom}). We denote a 2-bridge knot, which is also knows as a {\em rational knot}, with the notation $R(p,q)$.

In Table~\ref{tbl:6}, we have listed the upper bounds on the folded ribbonlength of knots with crossing number $\leq 6$. In the ``Notes'' column we have shown which formula we have used to find the upper bounds. These results have already appeared in \cite{CDPP}, but we include them here for completeness. It may well be possible to reduce the folded ribbonlength for the $6_2$ and $6_3$ knots.

Table~\ref{tbl:7-8} contains the upper bounds on folded ribbonlength of knots with crossing number 7 and~8. Only three knots use the universal upper bound formula: $\Rib([K])\leq  2.5\cdot \Cr(K)+1$. Namely, the three non-rational and non-pretzel 8 crossing knots: $8_{16}, 8_{17}$, and $8_{18}$.

The 9 crossing knots are found in Table~\ref{tbl:9}. There are 49 knots with crossing number 9. Of these, 18 knots are neither pretzel nor rational knots and use the universal upper bound of $\Rib([K])\leq  2.5\cdot 9+1=23.5$.  These are $9_{22}$. $9_{25}$, $9_{29}$, $9_{30}$, $9_{32}$, $9_{33}$, $9_{34}$, $9_{36}$, $9_{38}$, $9_{39}$, $9_{40}$, $9_{41}$, $9_{42}$, $9_{43}$, $9_{44}$, $9_{45}$, $9_{47}$, and $9_{49}$. To save room, we have listed them in Table~\ref{tbl:9} under ``Others''.

When we look at 10 crossing knots, the situation is very simple.  No one has yet extended pretzel notation to knots with $\geq 10$ crossings (or to links). We know knots 1 through 45 are rational and knots 46 through 165 are not rational. We have also separated out the single twist knot and two torus knots. We have summarized the information in Table~\ref{tbl:10}.

Other knots and links with known folded ribbonlength bounds can be found in Table~\ref{tbl:other}. The link nomenclature follows LinkInfo~\cite{linkinfo}. Some of this data previously appeared in \cite{CDPP}.

\begin{center}
\begin{table}[htbp]
\caption{Folded ribbonlength of knots with crossing number $\leq 6$.}
\begin{tabular}{ | c | c | c | l |}
\hline
Knot&  $\Rib([K])\leq$ ??? & Notes
\\ \hline
$0_1$ &  0 & 
\\ \hline
$3_1$ &  6 &  $\Rib([T(2,3)])\leq 3+3= 6$
\\ \hline
$4_1$ & 8 & $\Rib([T_2])\leq 2+6 = 8$
\\ \hline
$5_1$ & 8 & $\Rib([T(2,5))]\leq 5+3= 8$
\\ \hline
$5_2$&  9 & $\Rib([T_3])\leq 3+6 = 9$
\\ \hline
$6_1$&  10 & $\Rib([T_4])\leq 4+6 = 10$
\\ \hline
$6_2$& 12 & $\Rib([P(1,2,3)])\leq 1+2+3+6 = 12$
\\ \hline
$6_3$&  14 & $\Rib([R(13,5)])\leq 2\cdot6+2= 14$
\\ \hline
\end{tabular}
 \label{tbl:6}
\end{table}
\end{center}

\begin{center}
\begin{table} [htbp]
\caption{Folded ribbonlength of knots with crossing number $7$ and $8$.} 
\begin{tabular}{ | c | c | c | p{7cm} |} 
\hline
Knot& $\Rib([K])\leq$ ??? & Notes 
\\ \hline
$7_1$ & 10 &  $\Rib([T(2,7)])\leq 7+3= 10$
\\ \hline
$7_2$ & 11 & $\Rib([T_5])\leq 5+6 = 11$
\\ \hline
$7_3$  & 16 & $\Rib([R(13,3)])\leq 2\cdot 7+2=16$
\\ \hline
$7_4$& 13 & $\Rib([P(-3,-1,-3)])\leq 3+1+3+6=13$
\\ \hline
$7_5$& 15 & $\Rib([P(2,1,1,3])\leq 2+1 +1+3+8 = 15$
\\ \hline
$7_6$& 16  & $\Rib([R(19,7)])\leq 2\cdot 7+2=16$
\\ \hline
$7_7$& 16 & $\Rib([R(21,8)])\leq 2\cdot 7+2=16$
\\ \hline
\hline
$8_1$ &  12  & $\Rib([T_6])\leq 6+6=12$
\\ \hline
$8_2$ & 14 & $\Rib([P(1,2,5) ])\leq  1+2+5+6=14$
\\ \hline
$8_3$ & 18 & $\Rib([ R(17,4)]) \leq 2\cdot 8 +2 =18$
\\ \hline
$8_4$ & 14 & $\Rib([P(-1,-3,-4) ])\leq 1+3+4+6=14 $
\\ \hline
$8_5$ & 14 & $\Rib([P(2,3,3) ])\leq 2+3+3+6=14 $
\\ \hline
$8_6$ & 18  & $\Rib([ R(23,7)]) \leq 2\cdot 8 +2 =18$
\\ \hline
$8_7$ & 17 & $\Rib([P(-4,-1,3,-1) ])\leq 4+1+3+1+8= 17$
\\ \hline
$8_8$ &  18 & $\Rib([ R(25,9)]) \leq 2\cdot 8 +2 =18$
\\ \hline
$8_9$ &  17  & $\Rib([ P(3,1,-4,1)])\leq  3+1+4+1+8=17$
\\ \hline
$8_{10}$ & 17 & $\Rib([P(-3,-2,3,-1) ])\leq 3+2+3+1+8 = 17$
\\ \hline
$8_{11}$ &  18  & $\Rib([ R(27,10)]) \leq 2\cdot 8 +2 =18$
\\ \hline
$8_{12}$ & 18 & $\Rib([ R(29,12)]) \leq 2\cdot 8 +2 =18$
\\ \hline
$8_{13}$ & 18 &$\Rib([ R(29,3)]) \leq 2\cdot 8 +2 =18$
\\ \hline
$8_{14}$ & 18 & $\Rib([ R(31,12)]) \leq 2\cdot 8 +2 =18$
\\ \hline
$8_{15}$ & 20  & $\Rib([P(3,-1,-2,-1,3) ])\leq 3+1+2+1+3+10= 20$
\\ \hline
$8_{16}$ & 21 & $\Rib([ 8_{16}])\leq  2.5\cdot 8+1=21$
\\ \hline
$8_{17}$  & 21 & $\Rib([ 8_{17}])\leq  2.5\cdot 8+1=21$
\\ \hline
$8_{18}$  & 21 & $\Rib([ 8_{18}])\leq  2.5\cdot 8+1=21$
\\ \hline
$8_{19}$ & 8 & $\Rib([T(4,3)])\leq 2\cdot 4=8 $
\\ \hline
$8_{20}$ & 17 & $\Rib([P(3,-2,-3,1) ])\leq  3+2+3+1+8=17 $
\\ \hline
$8_{21}$ & 17  & $\Rib([P(3,3,-1,-2) ])\leq 3+3+1+2+8= 17 $
\\ \hline
\end{tabular}
 \label{tbl:7-8}
\end{table}
\end{center}

\begin{center}
\begin{table}[htbp]
\caption{Folded ribbonlength of knots with crossing number $9$.}
\begin{tabular}{ | c | c | c | p{6cm} |} 
\hline
Knot & $\Rib([K])\leq$ ??? & Notes 
\\ \hline
$9_1$  & 12 & $\Rib([T(2,9) ])\leq 9+3=12 $
\\ \hline
$9_2$  &  13 & $\Rib([T_7 ])\leq 7+6 =13 $
\\ \hline
$9_3$  & 16 & $\Rib([ P(1,-4,5)])\leq  1+4+5+6=16$
\\ \hline
$9_4$ & 16 & $\Rib([ P(-1,5,-4)])\leq 1+5+4+6=16 $
\\ \hline
$9_5$  & 15 & $\Rib([P(-1,-3,-5) ])\leq 1+3+5+6= 15 $
\\ \hline
$9_6$  & 19& $\Rib([P(-1,-1,3,6) ])\leq 1+1+3+6+8 = 19 $
\\ \hline
$9_7$ &  20 & $\Rib([ R(29,9)]) \leq 2\cdot 9 +2 =20$
\\ \hline
$9_8$ & 20  & $\Rib([ R(31,11)]) \leq 2\cdot 9 +2 =20$
\\ \hline
$9_9$   & 17 & $\Rib([P(4,1,1,3) ])\leq 4+1+1+3+8 =17$
\\ \hline
$9_{10}$ & 19& $\Rib([P(-3,-1,-1,-1,-3)  ])\leq 3+1+1+1+3 + 10= 19 $
\\ \hline
$9_{11}$  &20 & $\Rib([ R(33,14)]) \leq 2\cdot 9 +2 =20$
\\ \hline
$9_{12}$ & 20 & $\Rib([ R(35,13)]) \leq 2\cdot 9 +2 =20$
\\ \hline
$9_{13}$  &20 & $\Rib([ R(37,11)]) \leq 2\cdot 9 +2 =20$
\\ \hline
$9_{14}$ & 20 & $\Rib([ R(37,8)]) \leq 2\cdot 9 +2 =20$
\\ \hline
$9_{15}$ & 20& $\Rib([ R(39,16)]) \leq 2\cdot 9 +2 =20$
\\ \hline
$9_{16}$  & 17 & $\Rib([ P(2,3,1,3)])\leq 2+3+1+3+8= 17 $
\\ \hline
$9_{17}$ & 20 & $\Rib([ R(39,14)]) \leq 2\cdot 9 +2 =20$
\\ \hline
$9_{18}$ & 20 & $\Rib([ R(41,17)]) \leq 2\cdot 9 +2 =20$
\\ \hline
$9_{19}$ & 20 & $\Rib([ R(41,16)]) \leq 2\cdot 9 +2 =20$
\\ \hline
$9_{20}$ & 20 & $\Rib([ R(41,11)]) \leq 2\cdot 9 +2 =20$
\\ \hline
$9_{21}$ & 20 & $\Rib([ R(43,12)]) \leq 2\cdot 9 +2 =20$
\\ \hline
$9_{23}$ & 20 & $\Rib([ R(45,19)]) \leq 2\cdot 9 +2 =20$
\\ \hline
$9_{24}$ & 20 & $\Rib([P(-3,-1,3,-1,-2) ])\leq 3+1+3+1+2+10= 20 $
\\ \hline
$9_{26}$ & 20 & $\Rib([ R(47,18)]) \leq 2\cdot 9 +2 =20$
\\ \hline
$9_{27}$ & 20 & $\Rib([ R(49,19)]) \leq 2\cdot 9 +2 =20$
\\ \hline
$9_{28}$ & 23 & $\Rib([ P(-1,-1,3,-2,-1,3)])\leq 1+1+3+2+1+3+12= 23 $
\\ \hline
$9_{31}$ & 20 & $\Rib([ R(55,21)]) \leq 2\cdot 9 +2 =20$
\\ \hline
$9_{35}$ &  15 & $\Rib([ P(-3,-3,-3)])\leq 3+3+3+6=15 $
\\ \hline
$9_{37}$  & 21& $\Rib([P(-1,3,-3,3,-1) ])\leq 1+3+3+3+1+10+21 $
\\ \hline
$9_{46}$  &15 & $\Rib([P(3,3,-3 ])\leq 3+3+3+6=15 $
\\ \hline
$9_{48}$  &21 & $\Rib([P(3,-1,3,-1,3) ])\leq 3+1+3+1+3+10 = 21 $
\\ \hline
Others & 23.5  & $\Rib([K])\leq  2.5\cdot 9+1=23.5$
\\ \hline
\end{tabular}
  \label{tbl:9}
\end{table}
\end{center}

\begin{center}
\begin{table}[htbp]
\caption{Folded ribbonlength of knots with crossing number $10$.}
\begin{tabular}{ | c | c | c | l |}
\hline
Knot&  $\Rib([K])\leq$ ??? & Notes
\\ \hline
$10_1$ &  13 &  $\Rib([T(2,3)])\leq 10+3= 13$
\\ \hline
$10_2$ & 14 & $\Rib([T_{8}])\leq 8+6 = 14$
\\ \hline
$10_{124}$ & 10 & $\Rib([T(5,3))]\leq 2\cdot 5=10$
\\ \hline
$10_3$ through $10_{45}$ &  22 & $\Rib([K])\leq 2\cdot 10+2 = 22$
\\ \hline
$10_{46}$ through $10_{165}$ &  26 & $\Rib([K])\leq 2.5\cdot 10 +2= 26$
\\ \hline
\end{tabular}
 \label{tbl:10}
\end{table}
\end{center}

\begin{center}
\begin{table} [htbp]
\caption{Folded ribbonlength of other knots and links.}
\begin{tabular}{ | c | c | c | c |} 
\hline
Knot/link table& Name & $\Rib([K])\leq$ ??? & Notes 
\\ \hline
12n242 & $P(-2,3,7)$ & 18  & $\Rib[P(-2,3,7)]\leq 2+3+7+6=18$
\\ \hline
L2a1 & Hopf link & 4 & $\Rib[\text{Hopf}]\leq 4$
\\ \hline
L4a1 & $(2,4)$-torus & 7 & $\Rib([T(2,4)])\leq 4+3=7$
\\ \hline
L6a3 & $(2,6)$-torus & 9 & $\Rib([T(2,6)])\leq 6+3=9$
\\ \hline
\end{tabular}
 \label{tbl:other}
\end{table}
\end{center}



\section{Acknowledgments}
The folded ribbon project has developed over a number of years, and the author's research has been generously funded by Lenfest grants from Washington \& Lee University (W\&L) over many years. A special thanks go to the author's many W\&L student collaborators on the folded ribbon project, most recently Zhicheng Chen, Kyle Patterson, and Timi Patterson. The author is very grateful for the support and encouragement of Jason Cantarella and Richard Schwartz.  All the figures in this paper were made using Google Draw. 

\bibliography{folded-ribbons}{}
\bibliographystyle{plain}


\end{document}